\title{Bound-Preserving Finite-Volume Schemes for Systems of Continuity Equations with Saturation}
\newcommand{\authorPDF}{Bailo, Carrillo, Hu.}
\newcommand{\subjectPDF}{65M08; 35Q70; 35Q92; 45K05.}
\newcommand{\keywordsPDF}{Gradient flows; systems of continuity equations; finite-volume methods; bound preservation; energy dissipation; integro-differential equations.} \usepackage{authblk}
\author[1,2]{Rafael Bailo}
\author[2]{Jos\'{e} A. Carrillo}
\author[3]{Jingwei Hu}
\affil[1]{
Univ. Lille, CNRS, Inria, UMR 8524 - Laboratoire Paul Painlev\'{e}
}
\affil[ ]{
	F-59000 Lille, France
}
\affil[ ]{\textit{
		rafael.bailo@univ-lille.fr
	}}
\affil[ ]{}
\affil[2]{
	Mathematical Institute, University of Oxford
}
\affil[ ]{
	OX2 6GG Oxford, United Kingdom
}
\affil[ ]{\textit{bailo@maths.ox.ac.uk, carrillo@maths.ox.ac.uk}}
\affil[ ]{}
\affil[3]{
	Department of Applied Mathematics, University of Washington
}
\affil[ ]{
	Seattle, WA 98195, United States of America
}
\affil[ ]{\textit{hujw@uw.edu}} 
\let\newtitle\@title
\let\newauthor\@author
\let\newdate\@date
\definecolor{color1}{RGB}{0, 121, 178}
\definecolor{color2}{RGB}{255, 124, 37}
\definecolor{color3}{RGB}{37, 160, 55}
\definecolor{color4}{RGB}{220, 32, 44}
\definecolor{color5}{RGB}{147, 104, 186}
\definecolor{color6}{RGB}{143, 85, 76}
\definecolor{color7}{RGB}{230, 119, 192}
\definecolor{color8}{RGB}{127, 127, 127}
\definecolor{color9}{RGB}{192, 188, 55}
\definecolor{color10}{RGB}{0, 191, 206} 
\newcounter{review}
\newcommand{\ntcreview}[3]{\refstepcounter{review}

	{\color{#2}{\textbf{[#1]}: #3}}}
\newcommand{\creview}[3]{\ntcreview{#1}{#2}{#3}
	\addcontentsline{tor}{subsection}{\thereview~\textbf{[#1]}:~#3
	}}
\newcommand{\review}[2]{\creview{#1}{blue}{#2}}
\newcommand\listreviewname{List of Reviews}
\newcommand\listofreviews{\section*{\listreviewname}\@starttoc{tor}}
\newcommand{\subjectclassification}[1]{

	{\small\textbf{\textit{AMS Subject Classification --- }} #1}

}
\newcommand{\keywords}[1]{

	{\small\textbf{\textit{Keywords --- }} #1}

}
\renewcommand\lll\MoveEqLeft
\tikzset{thicker line small arrows m/.style args={#1in#2}{
			draw=#2,
			solid,
			line width=#1,
			shorten >=1mm,
			decoration={
					markings,
					mark=at position 1.0 with {\arrow[fill=#2,thin]{triangle 90}}
				},
			postaction={decorate}
		}}
\pgfplotsset{compat=1.16}
\newcolumntype{L}[1]{>{\raggedright\let\newline\\\arraybackslash\hspace{0pt}}m{#1}}
\newcolumntype{C}[1]{>{\centering\let\newline\\\arraybackslash\hspace{0pt}}m{#1}}
\newcolumntype{R}[1]{>{\raggedleft\let\newline\\\arraybackslash\hspace{0pt}}m{#1}}
\newcommand\term\emph
\numberwithin{equation}{section}
\def\@maketitle{\newpage
	\begin{center}\let \footnote \thanks
		{\LARGE\bfseries \@title \par}\vskip 2.5em{\large
				\lineskip .5em\begin{tabular}[t]{c}\@author
				\end{tabular}\par}\vskip 1em{\large \@date}\end{center}\par
	\vskip 1.5em}
\theoremstyle{plain}
\newtheorem{theorem}{Theorem}[section]
\newtheorem{proposition}[theorem]{Proposition}
\theoremstyle{remark}
\newtheorem{remark}[theorem]{\bf Remark}
\newtheorem{definition}[theorem]{\bf Definition}
\def\XXint#1#2#3{{\setbox0=\hbox{$#1{#2#3}{\int}$ }
			\vcenter{\hbox{$#2#3$ }}\kern-.6\wd0}}
\renewcommand{\th}{\textsuperscript{th} }
\DeclarePairedDelimiter{\prt}{(}{)}
\DeclarePairedDelimiter{\brk}{[}{]}
\DeclarePairedDelimiter{\abs}{|}{|}
\DeclarePairedDelimiter{\norm}{\|}{\|}
\DeclarePairedDelimiter{\set}{\{}{\}}
\DeclarePairedDelimiter{\inn}{\langle}{\rangle}
\newcommand{\inner}[2]{\inn{#1,#2}}
\newcommand\inner*[2]{\inn*{#1,#2}}
\DeclarePairedDelimiter{\positive}{(}{)^{+}}
\DeclarePairedDelimiter{\negative}{(}{)^{-}}
\newcommand\pos\positive
\renewcommand\neg\negative
\newcommand\pos*{\positive*}
\newcommand\neg*{\negative*}
\newcommand{\R}{{\mathbb{R}}}
\newcommand{\Rd}{{\mathbb{R}^d}}
\newcommand{\Rplus}{{\mathbb{R}^+}}
\renewcommand{\L}[1]{{L^{#1}}}
\newcommand{\Lone}{\L{1}}
\newcommand{\Ltwo}{\L{2}}
\newcommand{\Lp}{\L{p}}
\newcommand{\Linf}{\L{\infty}}
\newcommand{\pnorm}[2]{\norm{#2}_{\L{#1}}}
\newcommand\pnorm*[2]{\norm*{#2}_{\L{#1}}}
\newcommand{\psnorm}[3]{\norm{#3}_{\L{#1}(#2)}}
\newcommand\psnorm*[3]{\norm*{#3}_{\L{#1}(#2)}}
\newcommand{\pnormp}[2]{\pnorm{#1}{#2}^{#1}}
\newcommand\pnormp*[2]{\pnorm*{#1}{#2}^{#1}}
\newcommand{\psnormp}[3]{\psnorm{#1}{#2}{#3}^{#1}}
\newcommand\psnormp*[3]{\psnorm*{#1}{#2}{#3}^{#1}}
\newcommand\svec\vec
\renewcommand{\vec}{\mathbf}
\renewcommand{\svec}{\boldsymbol}
\newcommand{\bx}{\svec{x}}
\newcommand{\bu}{\svec{u}}
\newcommand{\bv}{\svec{v}}
\newcommand{\conv}{\ast}
\renewcommand{\d}{\mathrm{d}}
\newcommand{\dd}{\mathop{}\!\d}
\newcommand{\der}[2]{\frac{\d #1}{\d #2}}
\newcommand{\vder}[2]{\frac{\delta #1}{\delta #2}}
\newcommand{\ds}{\dd s}
\newcommand{\dx}{\dd x}
\newcommand{\dy}{\dd y}
\newcommand{\dbx}{\dd \bx}
\newcommand{\grad}{\nabla}
\renewcommand{\div}{\nabla\cdot}
\newcommand{\laplacian}{\Delta}
\newcommand{\laplace}{\laplacian}
\newcommand{\pt}{\partial_t}
\newcommand{\px}{\partial_x}
\newcommand{\Dt}{\Delta t}
\newcommand{\Dx}{\Delta x}
\newcommand{\Dy}{\Delta y}
\newcommand{\nhalf}{1/2}
\renewcommand{\i}{_{i}}
\newcommand{\ip}{_{i+1}}
\newcommand{\ih}{_{i+\nhalf}}
\newcommand{\imh}{_{i-\nhalf}}
\renewcommand{\j}{_{j}}
\newcommand{\jp}{_{j+1}}
\newcommand{\jm}{_{j-1}}
\newcommand{\jmh}{_{j-\nhalf}}
\renewcommand{\k}{_{k}}
\newcommand{\kp}{_{k+1}}
\newcommand{\kh}{_{k+\nhalf}}
\renewcommand{\l}{_{l}}
\newcommand{\n}{^{n}}
\newcommand{\np}{^{n+1}}
\newcommand{\nss}{^{**}}
\newcommand{\ppr}{(r)}
\newcommand{\Wr}{^{W,\,\ppr}}
\newcommand{\Wik}{W_{i-k}}
\newlength{\dhatheight}
	\newcommand{\Wr}{^{W,\,\ppr}}
	\renewcommand{\Wr}{^{W,\,\ppr}}
\newcommand{\s}{\sigma}
\let\goes\rightarrow
\newcommand{\brho}{\svec{\rho}}
\newcommand{\bF}{\svec{F}}
\newcommand{\bone}{\svec{1}}
\DeclareMathOperator{\diag}{diag}
\DeclareMathOperator{\He}{He}
\newcommand{\curlyH}{\mathcal{H}}
\newcommand{\curlyV}{\mathcal{V}}
\newcommand{\curlyW}{\mathcal{W}}
\newcommand{\bxi}{\svec{\xi}}
\newcommand{\bV}{\svec{V}}
\newcommand{\bg}{\svec{g}}
\DeclareMathOperator{\erf}{erf}
\newcommand{\dr}{\dd r}
\newcommand{\dtheta}{\dd \theta}
\newcommand{\bs}{\svec{s}} \usepackage{todonotes}
\newif\ifskiptable
\newcommand{\placedfigure}[1]{
	\begin{figure}\centering
		#1
	\end{figure}
}
\pgfplotsset{colormap={hsv}{
			hsb(0.00cm)=(0.00,0,0.95);
			hsb(0.05cm)=(0.05,1,1);
			hsb(0.10cm)=(0.10,1,1);
			hsb(0.15cm)=(0.15,1,1);
			hsb(0.20cm)=(0.20,1,1);
			hsb(0.25cm)=(0.25,1,1);
			hsb(0.30cm)=(0.30,1,1);
			hsb(0.35cm)=(0.35,1,1);
			hsb(0.40cm)=(0.40,1,1);
			hsb(0.45cm)=(0.45,1,1);
			hsb(0.50cm)=(0.50,1,1);
			hsb(0.55cm)=(0.55,1,1);
			hsb(0.60cm)=(0.60,1,1);
			hsb(0.65cm)=(0.65,1,1);
			hsb(0.70cm)=(0.70,1,1);
			hsb(0.75cm)=(0.75,1,1);
			hsb(0.80cm)=(0.80,1,1);
			hsb(0.85cm)=(0.85,1,1);
			hsb(0.90cm)=(0.90,1,1);
			hsb(0.95cm)=(0.95,1,1);
			hsb(1.00cm)=(1.00,1,1);
		}
}
\pgfplotsset{colormap={hsvSoft}{
			hsb(0.00cm)=(0.00,0,0.95);
			hsb(0.05cm)=(0.05,1,1);
			hsb(0.10cm)=(0.10,1,1);
			hsb(0.15cm)=(0.15,1,1);
			hsb(0.20cm)=(0.20,1,1);
			hsb(0.25cm)=(0.25,1,1);
			hsb(0.30cm)=(0.30,1,1);
			hsb(0.35cm)=(0.35,1,1);
			hsb(0.40cm)=(0.40,1,1);
			hsb(0.45cm)=(0.45,1,1);
			hsb(0.50cm)=(0.50,1,1);
			hsb(0.55cm)=(0.55,1,1);
			hsb(0.60cm)=(0.60,1,1);
			hsb(0.65cm)=(0.65,1,1);
			hsb(0.70cm)=(0.70,1,1);
			hsb(0.75cm)=(0.75,1,1);
			hsb(0.80cm)=(0.80,1,1);
			hsb(0.85cm)=(0.85,1,1);
			hsb(0.90cm)=(0.90,1,1);
			hsb(0.95cm)=(0.95,1,1);
			hsb(1.00cm)=(0.00,0,0.95);
		}
}
\pgfplotsset{colormap={viridisFull}{
			rgb=(0.26700401, 0.00487433, 0.32941519)
			rgb=(0.26851048, 0.00960483, 0.33542652)
			rgb=(0.26994384, 0.01462494, 0.34137895)
			rgb=(0.27130489, 0.01994186, 0.34726862)
			rgb=(0.27259384, 0.02556309, 0.35309303)
			rgb=(0.27380934, 0.03149748, 0.35885256)
			rgb=(0.27495242, 0.03775181, 0.36454323)
			rgb=(0.27602238, 0.04416723, 0.37016418)
			rgb=(0.2770184 , 0.05034437, 0.37571452)
			rgb=(0.27794143, 0.05632444, 0.38119074)
			rgb=(0.27879067, 0.06214536, 0.38659204)
			rgb=(0.2795655 , 0.06783587, 0.39191723)
			rgb=(0.28026658, 0.07341724, 0.39716349)
			rgb=(0.28089358, 0.07890703, 0.40232944)
			rgb=(0.28144581, 0.0843197 , 0.40741404)
			rgb=(0.28192358, 0.08966622, 0.41241521)
			rgb=(0.28232739, 0.09495545, 0.41733086)
			rgb=(0.28265633, 0.10019576, 0.42216032)
			rgb=(0.28291049, 0.10539345, 0.42690202)
			rgb=(0.28309095, 0.11055307, 0.43155375)
			rgb=(0.28319704, 0.11567966, 0.43611482)
			rgb=(0.28322882, 0.12077701, 0.44058404)
			rgb=(0.28318684, 0.12584799, 0.44496 )
			rgb=(0.283072 , 0.13089477, 0.44924127)
			rgb=(0.28288389, 0.13592005, 0.45342734)
			rgb=(0.28262297, 0.14092556, 0.45751726)
			rgb=(0.28229037, 0.14591233, 0.46150995)
			rgb=(0.28188676, 0.15088147, 0.46540474)
			rgb=(0.28141228, 0.15583425, 0.46920128)
			rgb=(0.28086773, 0.16077132, 0.47289909)
			rgb=(0.28025468, 0.16569272, 0.47649762)
			rgb=(0.27957399, 0.17059884, 0.47999675)
			rgb=(0.27882618, 0.1754902 , 0.48339654)
			rgb=(0.27801236, 0.18036684, 0.48669702)
			rgb=(0.27713437, 0.18522836, 0.48989831)
			rgb=(0.27619376, 0.19007447, 0.49300074)
			rgb=(0.27519116, 0.1949054 , 0.49600488)
			rgb=(0.27412802, 0.19972086, 0.49891131)
			rgb=(0.27300596, 0.20452049, 0.50172076)
			rgb=(0.27182812, 0.20930306, 0.50443413)
			rgb=(0.27059473, 0.21406899, 0.50705243)
			rgb=(0.26930756, 0.21881782, 0.50957678)
			rgb=(0.26796846, 0.22354911, 0.5120084 )
			rgb=(0.26657984, 0.2282621 , 0.5143487 )
			rgb=(0.2651445 , 0.23295593, 0.5165993 )
			rgb=(0.2636632 , 0.23763078, 0.51876163)
			rgb=(0.26213801, 0.24228619, 0.52083736)
			rgb=(0.26057103, 0.2469217 , 0.52282822)
			rgb=(0.25896451, 0.25153685, 0.52473609)
			rgb=(0.25732244, 0.2561304 , 0.52656332)
			rgb=(0.25564519, 0.26070284, 0.52831152)
			rgb=(0.25393498, 0.26525384, 0.52998273)
			rgb=(0.25219404, 0.26978306, 0.53157905)
			rgb=(0.25042462, 0.27429024, 0.53310261)
			rgb=(0.24862899, 0.27877509, 0.53455561)
			rgb=(0.2468114 , 0.28323662, 0.53594093)
			rgb=(0.24497208, 0.28767547, 0.53726018)
			rgb=(0.24311324, 0.29209154, 0.53851561)
			rgb=(0.24123708, 0.29648471, 0.53970946)
			rgb=(0.23934575, 0.30085494, 0.54084398)
			rgb=(0.23744138, 0.30520222, 0.5419214 )
			rgb=(0.23552606, 0.30952657, 0.54294396)
			rgb=(0.23360277, 0.31382773, 0.54391424)
			rgb=(0.2316735 , 0.3181058 , 0.54483444)
			rgb=(0.22973926, 0.32236127, 0.54570633)
			rgb=(0.22780192, 0.32659432, 0.546532 )
			rgb=(0.2258633 , 0.33080515, 0.54731353)
			rgb=(0.22392515, 0.334994 , 0.54805291)
			rgb=(0.22198915, 0.33916114, 0.54875211)
			rgb=(0.22005691, 0.34330688, 0.54941304)
			rgb=(0.21812995, 0.34743154, 0.55003755)
			rgb=(0.21620971, 0.35153548, 0.55062743)
			rgb=(0.21429757, 0.35561907, 0.5511844 )
			rgb=(0.21239477, 0.35968273, 0.55171011)
			rgb=(0.2105031 , 0.36372671, 0.55220646)
			rgb=(0.20862342, 0.36775151, 0.55267486)
			rgb=(0.20675628, 0.37175775, 0.55311653)
			rgb=(0.20490257, 0.37574589, 0.55353282)
			rgb=(0.20306309, 0.37971644, 0.55392505)
			rgb=(0.20123854, 0.38366989, 0.55429441)
			rgb=(0.1994295 , 0.38760678, 0.55464205)
			rgb=(0.1976365 , 0.39152762, 0.55496905)
			rgb=(0.19585993, 0.39543297, 0.55527637)
			rgb=(0.19410009, 0.39932336, 0.55556494)
			rgb=(0.19235719, 0.40319934, 0.55583559)
			rgb=(0.19063135, 0.40706148, 0.55608907)
			rgb=(0.18892259, 0.41091033, 0.55632606)
			rgb=(0.18723083, 0.41474645, 0.55654717)
			rgb=(0.18555593, 0.4185704 , 0.55675292)
			rgb=(0.18389763, 0.42238275, 0.55694377)
			rgb=(0.18225561, 0.42618405, 0.5571201 )
			rgb=(0.18062949, 0.42997486, 0.55728221)
			rgb=(0.17901879, 0.43375572, 0.55743035)
			rgb=(0.17742298, 0.4375272 , 0.55756466)
			rgb=(0.17584148, 0.44128981, 0.55768526)
			rgb=(0.17427363, 0.4450441 , 0.55779216)
			rgb=(0.17271876, 0.4487906 , 0.55788532)
			rgb=(0.17117615, 0.4525298 , 0.55796464)
			rgb=(0.16964573, 0.45626209, 0.55803034)
			rgb=(0.16812641, 0.45998802, 0.55808199)
			rgb=(0.1666171 , 0.46370813, 0.55811913)
			rgb=(0.16511703, 0.4674229 , 0.55814141)
			rgb=(0.16362543, 0.47113278, 0.55814842)
			rgb=(0.16214155, 0.47483821, 0.55813967)
			rgb=(0.16066467, 0.47853961, 0.55811466)
			rgb=(0.15919413, 0.4822374 , 0.5580728 )
			rgb=(0.15772933, 0.48593197, 0.55801347)
			rgb=(0.15626973, 0.4896237 , 0.557936 )
			rgb=(0.15481488, 0.49331293, 0.55783967)
			rgb=(0.15336445, 0.49700003, 0.55772371)
			rgb=(0.1519182 , 0.50068529, 0.55758733)
			rgb=(0.15047605, 0.50436904, 0.55742968)
			rgb=(0.14903918, 0.50805136, 0.5572505 )
			rgb=(0.14760731, 0.51173263, 0.55704861)
			rgb=(0.14618026, 0.51541316, 0.55682271)
			rgb=(0.14475863, 0.51909319, 0.55657181)
			rgb=(0.14334327, 0.52277292, 0.55629491)
			rgb=(0.14193527, 0.52645254, 0.55599097)
			rgb=(0.14053599, 0.53013219, 0.55565893)
			rgb=(0.13914708, 0.53381201, 0.55529773)
			rgb=(0.13777048, 0.53749213, 0.55490625)
			rgb=(0.1364085 , 0.54117264, 0.55448339)
			rgb=(0.13506561, 0.54485335, 0.55402906)
			rgb=(0.13374299, 0.54853458, 0.55354108)
			rgb=(0.13244401, 0.55221637, 0.55301828)
			rgb=(0.13117249, 0.55589872, 0.55245948)
			rgb=(0.1299327 , 0.55958162, 0.55186354)
			rgb=(0.12872938, 0.56326503, 0.55122927)
			rgb=(0.12756771, 0.56694891, 0.55055551)
			rgb=(0.12645338, 0.57063316, 0.5498411 )
			rgb=(0.12539383, 0.57431754, 0.54908564)
			rgb=(0.12439474, 0.57800205, 0.5482874 )
			rgb=(0.12346281, 0.58168661, 0.54744498)
			rgb=(0.12260562, 0.58537105, 0.54655722)
			rgb=(0.12183122, 0.58905521, 0.54562298)
			rgb=(0.12114807, 0.59273889, 0.54464114)
			rgb=(0.12056501, 0.59642187, 0.54361058)
			rgb=(0.12009154, 0.60010387, 0.54253043)
			rgb=(0.11973756, 0.60378459, 0.54139999)
			rgb=(0.11951163, 0.60746388, 0.54021751)
			rgb=(0.11942341, 0.61114146, 0.53898192)
			rgb=(0.11948255, 0.61481702, 0.53769219)
			rgb=(0.11969858, 0.61849025, 0.53634733)
			rgb=(0.12008079, 0.62216081, 0.53494633)
			rgb=(0.12063824, 0.62582833, 0.53348834)
			rgb=(0.12137972, 0.62949242, 0.53197275)
			rgb=(0.12231244, 0.63315277, 0.53039808)
			rgb=(0.12344358, 0.63680899, 0.52876343)
			rgb=(0.12477953, 0.64046069, 0.52706792)
			rgb=(0.12632581, 0.64410744, 0.52531069)
			rgb=(0.12808703, 0.64774881, 0.52349092)
			rgb=(0.13006688, 0.65138436, 0.52160791)
			rgb=(0.13226797, 0.65501363, 0.51966086)
			rgb=(0.13469183, 0.65863619, 0.5176488 )
			rgb=(0.13733921, 0.66225157, 0.51557101)
			rgb=(0.14020991, 0.66585927, 0.5134268 )
			rgb=(0.14330291, 0.66945881, 0.51121549)
			rgb=(0.1466164 , 0.67304968, 0.50893644)
			rgb=(0.15014782, 0.67663139, 0.5065889 )
			rgb=(0.15389405, 0.68020343, 0.50417217)
			rgb=(0.15785146, 0.68376525, 0.50168574)
			rgb=(0.16201598, 0.68731632, 0.49912906)
			rgb=(0.1663832 , 0.69085611, 0.49650163)
			rgb=(0.1709484 , 0.69438405, 0.49380294)
			rgb=(0.17570671, 0.6978996 , 0.49103252)
			rgb=(0.18065314, 0.70140222, 0.48818938)
			rgb=(0.18578266, 0.70489133, 0.48527326)
			rgb=(0.19109018, 0.70836635, 0.48228395)
			rgb=(0.19657063, 0.71182668, 0.47922108)
			rgb=(0.20221902, 0.71527175, 0.47608431)
			rgb=(0.20803045, 0.71870095, 0.4728733 )
			rgb=(0.21400015, 0.72211371, 0.46958774)
			rgb=(0.22012381, 0.72550945, 0.46622638)
			rgb=(0.2263969 , 0.72888753, 0.46278934)
			rgb=(0.23281498, 0.73224735, 0.45927675)
			rgb=(0.2393739 , 0.73558828, 0.45568838)
			rgb=(0.24606968, 0.73890972, 0.45202405)
			rgb=(0.25289851, 0.74221104, 0.44828355)
			rgb=(0.25985676, 0.74549162, 0.44446673)
			rgb=(0.26694127, 0.74875084, 0.44057284)
			rgb=(0.27414922, 0.75198807, 0.4366009 )
			rgb=(0.28147681, 0.75520266, 0.43255207)
			rgb=(0.28892102, 0.75839399, 0.42842626)
			rgb=(0.29647899, 0.76156142, 0.42422341)
			rgb=(0.30414796, 0.76470433, 0.41994346)
			rgb=(0.31192534, 0.76782207, 0.41558638)
			rgb=(0.3198086 , 0.77091403, 0.41115215)
			rgb=(0.3277958 , 0.77397953, 0.40664011)
			rgb=(0.33588539, 0.7770179 , 0.40204917)
			rgb=(0.34407411, 0.78002855, 0.39738103)
			rgb=(0.35235985, 0.78301086, 0.39263579)
			rgb=(0.36074053, 0.78596419, 0.38781353)
			rgb=(0.3692142 , 0.78888793, 0.38291438)
			rgb=(0.37777892, 0.79178146, 0.3779385 )
			rgb=(0.38643282, 0.79464415, 0.37288606)
			rgb=(0.39517408, 0.79747541, 0.36775726)
			rgb=(0.40400101, 0.80027461, 0.36255223)
			rgb=(0.4129135 , 0.80304099, 0.35726893)
			rgb=(0.42190813, 0.80577412, 0.35191009)
			rgb=(0.43098317, 0.80847343, 0.34647607)
			rgb=(0.44013691, 0.81113836, 0.3409673 )
			rgb=(0.44936763, 0.81376835, 0.33538426)
			rgb=(0.45867362, 0.81636288, 0.32972749)
			rgb=(0.46805314, 0.81892143, 0.32399761)
			rgb=(0.47750446, 0.82144351, 0.31819529)
			rgb=(0.4870258 , 0.82392862, 0.31232133)
			rgb=(0.49661536, 0.82637633, 0.30637661)
			rgb=(0.5062713 , 0.82878621, 0.30036211)
			rgb=(0.51599182, 0.83115784, 0.29427888)
			rgb=(0.52577622, 0.83349064, 0.2881265 )
			rgb=(0.5356211 , 0.83578452, 0.28190832)
			rgb=(0.5455244 , 0.83803918, 0.27562602)
			rgb=(0.55548397, 0.84025437, 0.26928147)
			rgb=(0.5654976 , 0.8424299 , 0.26287683)
			rgb=(0.57556297, 0.84456561, 0.25641457)
			rgb=(0.58567772, 0.84666139, 0.24989748)
			rgb=(0.59583934, 0.84871722, 0.24332878)
			rgb=(0.60604528, 0.8507331 , 0.23671214)
			rgb=(0.61629283, 0.85270912, 0.23005179)
			rgb=(0.62657923, 0.85464543, 0.22335258)
			rgb=(0.63690157, 0.85654226, 0.21662012)
			rgb=(0.64725685, 0.85839991, 0.20986086)
			rgb=(0.65764197, 0.86021878, 0.20308229)
			rgb=(0.66805369, 0.86199932, 0.19629307)
			rgb=(0.67848868, 0.86374211, 0.18950326)
			rgb=(0.68894351, 0.86544779, 0.18272455)
			rgb=(0.69941463, 0.86711711, 0.17597055)
			rgb=(0.70989842, 0.86875092, 0.16925712)
			rgb=(0.72039115, 0.87035015, 0.16260273)
			rgb=(0.73088902, 0.87191584, 0.15602894)
			rgb=(0.74138803, 0.87344918, 0.14956101)
			rgb=(0.75188414, 0.87495143, 0.14322828)
			rgb=(0.76237342, 0.87642392, 0.13706449)
			rgb=(0.77285183, 0.87786808, 0.13110864)
			rgb=(0.78331535, 0.87928545, 0.12540538)
			rgb=(0.79375994, 0.88067763, 0.12000532)
			rgb=(0.80418159, 0.88204632, 0.11496505)
			rgb=(0.81457634, 0.88339329, 0.11034678)
			rgb=(0.82494028, 0.88472036, 0.10621724)
			rgb=(0.83526959, 0.88602943, 0.1026459 )
			rgb=(0.84556056, 0.88732243, 0.09970219)
			rgb=(0.8558096 , 0.88860134, 0.09745186)
			rgb=(0.86601325, 0.88986815, 0.09595277)
			rgb=(0.87616824, 0.89112487, 0.09525046)
			rgb=(0.88627146, 0.89237353, 0.09537439)
			rgb=(0.89632002, 0.89361614, 0.09633538)
			rgb=(0.90631121, 0.89485467, 0.09812496)
			rgb=(0.91624212, 0.89609127, 0.1007168 )
			rgb=(0.92610579, 0.89732977, 0.10407067)
			rgb=(0.93590444, 0.8985704 , 0.10813094)
			rgb=(0.94563626, 0.899815 , 0.11283773)
			rgb=(0.95529972, 0.90106534, 0.11812832)
			rgb=(0.96489353, 0.90232311, 0.12394051)
			rgb=(0.97441665, 0.90358991, 0.13021494)
			rgb=(0.98386829, 0.90486726, 0.13689671)
			rgb=(0.99324789, 0.90615657, 0.1439362 )
		}
}
\pgfplotsset{colormap={viridisSoft}{
			rgb255=(242, 242, 242);
rgb=(0.28026,0.1657,0.4765);
			rgb=(0.26366,0.23763,0.51877);
			rgb=(0.23744,0.3052,0.54192);
			rgb=(0.20862,0.36775,0.55267);
			rgb=(0.18225,0.42618,0.55711);
			rgb=(0.1592,0.48224,0.55807);
			rgb=(0.13777,0.53749,0.5549);
			rgb=(0.12115,0.59274,0.54465);
			rgb=(0.12808,0.64775,0.5235);
			rgb=(0.18065,0.7014,0.48819);
			rgb=(0.27415,0.75198,0.4366);
			rgb=(0.39517,0.79747,0.36775);
			rgb=(0.53561,0.83578,0.2819);
			rgb=(0.68895,0.86545,0.18272);
			rgb=(0.84557,0.88733,0.0997);
			rgb=(0.99324,0.90616,0.14394)
		}
}
\pgfplotsset{colormap={cellRed}{
			rgb255=(242.0,242.0,242.0);
			rgb255=(241.63157894736844,234.47368421052633,234.47368421052633);
			rgb255=(241.26315789473685,226.94736842105266,226.94736842105266);
			rgb255=(240.89473684210526,219.42105263157893,219.42105263157893);
			rgb255=(240.5263157894737,211.89473684210526,211.89473684210526);
			rgb255=(240.1578947368421,204.3684210526316,204.3684210526316);
			rgb255=(239.78947368421052,196.84210526315792,196.84210526315792);
			rgb255=(239.42105263157896,189.31578947368422,189.31578947368422);
			rgb255=(239.05263157894737,181.78947368421052,181.78947368421052);
			rgb255=(238.6842105263158,174.26315789473688,174.26315789473688);
			rgb255=(238.31578947368422,166.73684210526315,166.73684210526315);
			rgb255=(237.94736842105263,159.21052631578948,159.21052631578948);
			rgb255=(237.57894736842104,151.68421052631578,151.68421052631578);
			rgb255=(237.21052631578948,144.1578947368421,144.1578947368421);
			rgb255=(236.84210526315792,136.63157894736844,136.63157894736844);
			rgb255=(236.47368421052633,129.10526315789474,129.10526315789474);
			rgb255=(236.10526315789474,121.57894736842107,121.57894736842107);
			rgb255=(235.73684210526318,114.05263157894737,114.05263157894737);
			rgb255=(235.3684210526316,106.52631578947368,106.52631578947368);
			rgb255=(235.0,99.0,99.0);
		}
}
\pgfplotsset{colormap={cellGreen}{
			rgb255=(242.0,242.0,242.0);
			rgb255=(236.21052631578948,239.5263157894737,234.26315789473685);
			rgb255=(230.42105263157896,237.05263157894737,226.5263157894737);
			rgb255=(224.6315789473684,234.57894736842104,218.78947368421052);
			rgb255=(218.8421052631579,232.10526315789474,211.05263157894737);
			rgb255=(213.05263157894737,229.63157894736844,203.31578947368422);
			rgb255=(207.26315789473685,227.1578947368421,195.57894736842107);
			rgb255=(201.4736842105263,224.68421052631578,187.8421052631579);
			rgb255=(195.68421052631578,222.21052631578948,180.10526315789474);
			rgb255=(189.8947368421053,219.73684210526318,172.36842105263162);
			rgb255=(184.10526315789474,217.26315789473682,164.63157894736844);
			rgb255=(178.31578947368422,214.78947368421052,156.89473684210526);
			rgb255=(172.5263157894737,212.31578947368422,149.1578947368421);
			rgb255=(166.73684210526318,209.84210526315792,141.42105263157896);
			rgb255=(160.94736842105263,207.3684210526316,133.6842105263158);
			rgb255=(155.1578947368421,204.89473684210526,125.94736842105263);
			rgb255=(149.3684210526316,202.42105263157893,118.21052631578948);
			rgb255=(143.57894736842104,199.94736842105266,110.47368421052632);
			rgb255=(137.78947368421052,197.47368421052633,102.73684210526316);
			rgb255=(132.0,195.0,95.0);
		}
}
\pgfplotsset{colormap={cellRedSquared}{
			rgb255=(242.0,242.0,242.0);
			rgb255=(241.28254847645428,227.34349030470915,227.34349030470915);
			rgb255=(240.60387811634348,213.47922437673128,213.47922437673128);
			rgb255=(239.9639889196676,200.40720221606648,200.40720221606648);
			rgb255=(239.36288088642658,188.1274238227147,188.1274238227147);
			rgb255=(238.8005540166205,176.63988919667594,176.63988919667594);
			rgb255=(238.2770083102493,165.94459833795014,165.94459833795014);
			rgb255=(237.79224376731304,156.04155124653738,156.04155124653738);
			rgb255=(237.34626038781164,146.93074792243766,146.93074792243766);
			rgb255=(236.93905817174516,138.61218836565098,138.61218836565098);
			rgb255=(236.57063711911357,131.0858725761773,131.0858725761773);
			rgb255=(236.2409972299169,124.35180055401662,124.35180055401662);
			rgb255=(235.95013850415512,118.40997229916897,118.40997229916897);
			rgb255=(235.69806094182823,113.26038781163435,113.26038781163435);
			rgb255=(235.4847645429363,108.90304709141274,108.90304709141274);
			rgb255=(235.3102493074792,105.33795013850416,105.33795013850416);
			rgb255=(235.17451523545705,102.56509695290858,102.56509695290858);
			rgb255=(235.0775623268698,100.58448753462605,100.58448753462605);
			rgb255=(235.01939058171746,99.3961218836565,99.3961218836565);
			rgb255=(235.0,99.0,99.0);
		}
}
\pgfplotsset{colormap={cellGreenSquared}{
			rgb255=(242.0,242.0,242.0);
			rgb255=(230.7257617728532,237.18282548476455,226.93351800554018);
			rgb255=(220.06094182825484,232.62603878116343,212.6814404432133);
			rgb255=(210.00554016620498,228.32963988919667,199.2437673130194);
			rgb255=(200.5595567867036,224.29362880886427,186.62049861495845);
			rgb255=(191.7229916897507,220.5180055401662,174.8116343490305);
			rgb255=(183.49584487534625,217.0027700831025,163.81717451523545);
			rgb255=(175.87811634349032,213.74792243767314,153.63711911357342);
			rgb255=(168.86980609418282,210.75346260387812,144.27146814404432);
			rgb255=(162.47091412742384,208.01939058171746,135.72022160664818);
			rgb255=(156.68144044321332,205.54570637119116,127.98337950138506);
			rgb255=(151.50138504155126,203.33240997229916,121.06094182825484);
			rgb255=(146.9307479224377,201.37950138504155,114.95290858725764);
			rgb255=(142.96952908587255,199.68698060941827,109.65927977839334);
			rgb255=(139.61772853185596,198.25484764542935,105.18005540166205);
			rgb255=(136.8753462603878,197.0831024930748,101.51523545706371);
			rgb255=(134.74238227146813,196.17174515235456,98.66481994459834);
			rgb255=(133.21883656509695,195.5207756232687,96.62880886426592);
			rgb255=(132.30470914127426,195.13019390581718,95.40720221606648);
			rgb255=(132.0,195.0,95.0);
		}
}
\pgfplotsset{every axis/.append style={
			grid=both,
			grid style={white, line width=.1pt},
			major grid style={white, line width=1.5pt},
			axis background/.style={fill=gray!10},
			axis line style={draw=none},
			tick style={draw=none},
			xlabel = $x$,
line width=1pt,
legend style={
					line width = 1pt,
					draw=none,
					/tikz/every even column/.append style={column sep=0.5cm}
				},
		}}
\definecolor{gg0}{HTML}{E24A33}
\definecolor{gg1}{HTML}{348ABD}
\definecolor{gg2}{HTML}{988ED5}
\definecolor{gg3}{HTML}{777777}
\definecolor{gg4}{HTML}{FBC15E}
\definecolor{gg5}{HTML}{8EBA42}
\definecolor{gg6}{HTML}{FFB5B8}
\pgfplotsset{
	/pgfplots/colormap={bright}{rgb255=(0,0,0) rgb255=(78,3,100) rgb255=(2,74,255)
			rgb255=(255,21,181) rgb255=(255,113,26) rgb255=(147,213,114) rgb255=(230,255,0)
			rgb255=(255,255,255)}
}
\renewcommand{\review}[2]{}
\renewcommand{\creview}[3]{}
\renewcommand{\ntcreview}[3]{}
\renewcommand{\tableofcontents}{}
\renewcommand{\listofreviews}{}
\def\csname ver@etex.sty\endcsname{3000/12/31}
\newcommand{\tocnewpage}{}
\newcommand{\revision}[2]{#2}
\newcommand{\revisionNote}[2]{}
\definecolor{revisionColourOne}{RGB}{180,0,0}
\definecolor{revisionColourTwo}{RGB}{0,0,180}
\newcommand{\revisionOne}[1]{\revision{revisionColourOne}{#1}}
\newcommand{\revisionTwo}[1]{\revision{revisionColourTwo}{#1}}
\newcommand{\revisionNoteOne}[1]{\revisionNote{revisionColourOne}{#1}}
\newcommand{\revisionNoteTwo}[1]{\revisionNote{revisionColourTwo}{#1}}
\begin{document}
\begin{singlespace}\maketitle\end{singlespace}
\begin{abstract}
	We propose finite-volume schemes for general continuity equations which preserve positivity and global bounds that arise from saturation effects in the mobility function. In the case of gradient flows, the schemes dissipate the free energy at the fully discrete level. Moreover, these schemes are generalised to coupled systems of non-linear continuity equations, such as multispecies models in mathematical physics or biology, preserving the bounds and the dissipation of the energy whenever applicable. These results are illustrated through extensive numerical simulations which explore known behaviours in biology and showcase new phenomena not yet described by the literature.
\end{abstract}
\subjectclassification{\subjectPDF}
\keywords{\keywordsPDF}
\tocnewpage
\tableofcontents
\tocnewpage
\listofreviews
\tocnewpage

\revisionNoteOne{Changes corresponding to the comments of Reviewer 1 are shown in red.}
\revisionNoteTwo{Similarly, changes corresponding to Reviewer 2 are shown in blue.}

\section{Introduction}

Systems of aggregation-diffusion equations are ubiquitous in science and engineering, particularly in mathematical biology, where population dynamics models have a distinctive importance. These systems faithfully reproduce attraction effects, usually modelled by non-local terms; repulsion effects, typically encoded by non-linear diffusion and cross-diffusion; and global constraints, usually due to volume exclusion effects within the populations. The balance and interplay between these terms, which leads interesting phenomena in single population models \cite{CHS18,CCY19}, can produce even richer behaviours in systems of multiple species \cite{MURAKAWA20151,C.M.S+2019}. For instance, cell-sorting phenomena, triggered by differential adhesion, has been reported in aggregation-reaction-diffusion systems \cite{BDFS18,Trush5861}; this biological behaviour is sharply captured by these models, matching experimental data well \cite{C.M.S+2019}.

Many of these models take the form of a system of gradient flows:
\begin{equation}\label{eq:multispeciesgradientflowintro}
	\pt \brho
	= \div\prt*{M\prt{\brho} \psi(\sigma) \grad\prt*{\vder{E}{\brho}}}
	= \sum_{l=1}^{d} \partial_{x\l}\prt*{M\prt{\brho} \psi(\sigma) \partial_{x\l}\prt*{\vder{E}{\brho}}}
\end{equation}
for a given energy functional $E$. Here, $\brho$ is a vector $\brho=\prt{\rho_1, \rho_2, \cdots, \rho_P}^\top$ of the densities of the $P$ species, and $\rho_p\prt{t,\bx}: \Rplus\times\Omega \rightarrow \Rplus$. $M\prt{\brho}$ is a $P\times P$ positive semi-definite matrix, possibly non-symmetric.
Additionally, $\psi(\s):\R^+\rightarrow\R$ is a \textit{saturation}: a decreasing function of the \textit{total density} of the system, $\s\prt{t,\bx}\coloneqq\sum_{p=1}^{P} \rho_p\prt{t,\bx}$, such that $\psi(\alpha)=0$ at a given \textit{saturation level} $\alpha$, viz. \cref{def:saturation}. The energy functional is key to the analysis of this problem, since it provides a formal dissipation estimate:
\begin{equation}
	\der{}{t}E\brk{\brho}
	= \int_\Omega \vder{E}{\brho} \cdot \pt \brho
	=
	\revisionOne{
		-\sum_{l=1}^{d}
		\int_\Omega \psi\prt{\sigma} \partial_{x\l}\prt*{\vder{E}{\brho}} \cdot M\prt{\brho} \partial_{x\l}\prt*{\vder{E}{\brho}} \dbx
	}
	\leq 0,
\end{equation}
since $M\prt{\brho}$ is positive semi-definite. The form of the energy functional can lead to a large range of phenomena, including linear/non-linear diffusion and local/non-local drifts \cite{C.M.V2003}.

The analysis of cross-diffusion systems which include potential or non-local terms is nevertheless quite challenging. We highlight a series of recent works \cite{jungel2015boundedness, jungelzamponi,jungelzamponi2} which deal with a family of cross-diffusion systems without drift terms; they exploit the gradient-flow structure of the systems to define global solutions with global pointwise bounds. The entropy dissipation techniques used there are not only essential to define the solutions, but also to determine their long-time asymptotics. Similar results have only been achieved for systems with drift terms in a handful of specific cases \cite{BDFS18,BCPS20}.

This work is concerned with the design of numerical schemes that preserve the structural properties of the solutions to \eqref{eq:multispeciesgradientflowintro}, and thus, that are suitable to explore their rich qualitative behaviours. To that end, two crucial aspects beg consideration. The first one is the saturation effect, due to $\psi(\sigma)$, which arises in these models due to volume exclusion effects, see for instance \cite{DBLP:journals/nhm/AlmeidaBPP19}; the total density of the system has a natural global bound, which must be preserved by the numerical scheme, along with the non-negativity of the density of each species. The second one is the gradient-flow structure, present in many applications; in these cases, a scheme which captures the dissipation at the fully discrete level (a \textit{structure-preserving scheme}) is desirable. We emphasise that the saturation terms affect the rate of energy dissipation of the system, and therefore their proper discretisation is required for this goal.

While numerical works for continuity equations are abundant (\cite{CCH15,PZ18,S.C.S2018,B.C.H2020,E.R2020} among others), few deal with the saturation effects as well as systems \cite{S.C.S2019,LZ21}. The work \cite{DBLP:journals/nhm/AlmeidaBPP19} proposes an upwind scheme for scalar Fokker-Planck-like equations in chemotaxis, where the saturation effects only act on the drift term. A series of works by J\"{u}ngel and collaborators (\cite{jungel2021discrete} and the references therein) propose finite-volume schemes for cross-diffusion systems without drifts; there, a hard volume-filling constraint is imposed by expressing one solution variable in terms of the rest.

In this work we will propose finite-volume schemes for the general formulation \eqref{eq:multispeciesgradientflowintro} which preserve both the positivity and the global bounds induced by the saturation effects, and they also guarantee the dissipation of the free energy at the fully discrete level. Based on our previous work \cite{B.C.H2020}, we consider implicit schemes where the numerical fluxes are carefully chosen to preserve the lower and upper bounds, a property which later plays a crucial role in preserving the gradient-flow structure. Our numerical methods combine techniques from systems of hyperbolic conservation laws \cite{H.R.T1995,H.R2015}, such as upwinding, with a careful discretization of the velocity fields \cite{CCH15,B.C.H2020} in order to capture the desired dissipative structure.

We will demonstrate the prowess of our methods through a series of numerical examples. We remark that no general theoretical results for equations or systems with saturation are available in the literature, except for a few cases \cite{CLSS10,H.R.T1995,H.R2015}, making our numerical explorations very interesting in terms of new phenomena.

Our work is organised in a constructive way. We begin by discussing the case of scalar gradient flows with saturation in \cref{sec:newGradientFlowsScalar}, where we design an implicit numerical scheme, and prove that it preserves the bounds and dissipation structure of the equation unconditionally. The scheme is generalised in \cref{sec:newGradientFlowsSystems} to the case of systems of gradient flows, again demonstrating the unconditional structure-preserving properties at the fully discrete level. To conclude, \cref{sec:numericalExperiments} is devoted to numerical experiments which validate the numerical schemes and showcase their use. These experiments illustrate novel phenomena which arises from the saturation effects, not yet described by the available literature.

\section{Scalar Gradient Flows with Saturation}\label{sec:newGradientFlowsScalar}

We begin this work by studying a family of scalar general flow equations of the form
\begin{equation}\label{eq:kineticgradientflowsaturation}
	\begin{cases}
		\pt\rho = \div\brk{\rho \psi\prt{\rho} \grad \prt{H'(\rho) + V + W\conv\rho}}, \\
		\rho(0,\bx) = \rho_0(\bx),
	\end{cases}
\end{equation}
for $t>0$ and $\bx\in\Omega\subseteq \mathbb{R}^d$.
This equation describes the evolution of a \textit{density} of particles $\rho\prt{t, \bx}: \Rplus\times\Omega\rightarrow\Rplus$, an unknown non-negative function with a prescribed initial datum $\rho_0(\bx) : \Omega\rightarrow\Rplus$. The \textit{internal energy density} $H(\rho)$, a convex function, models linear or non-linear diffusion on $\rho$. The \textit{confining potential} $V(\bx)$ models external forces acting on the density. The \textit{interaction potential} $W(\bx)$, a symmetric function (typically radial), models the attraction or repulsion between particles.

\Cref{eq:kineticgradientflowsaturation}, which can be interpreted as a non-linear continuity equation, includes the applications discussed in the introduction which exhibit saturation effects.
The term $\psi(\rho)$ is a \textit{saturation} of the density $\rho$ in the following sense:

\begin{definition}[Saturation]\label{def:saturation}
	A \textit{saturation} is a continuous function $\psi(s) : \Rplus\rightarrow \R$ with the properties: $\psi$ is non-increasing ($s_1 \leq s_2$ implies $\psi(s_1) \geq \psi(s_2)$); and there exists $\alpha>0$, called the \textit{saturation level}, such that $\psi(\alpha) = 0$ and $(\alpha-s)\psi(s)>0$ for $s\neq\alpha$.
\end{definition}

Typical examples of saturations are $\psi(s)=\alpha - s$ and $\psi(s)=(\alpha - s)|\alpha - s|^{m-1}$, $m>1$, commonly used in population models in mathematical biology \cite{J.B2002,C.M.S+2019}, as well as the social sciences \cite{B.P2016,B.B.R+2017}.
The case without saturation, $\psi(\rho)=1$, given by
\begin{equation}\label{eq:kineticgradientflow}
	\begin{cases}
		\pt\rho = \div\brk{\rho \grad \prt{H'(\rho) + V + W\conv\rho}}, \\
		\rho(0,\bx) = \rho_0(\bx),
	\end{cases}
\end{equation}
for $t>0$ and $\bx\in\Omega\subseteq \mathbb{R}^d$, possesses an interesting structural property: it is the \textit{gradient flow} of the energy functional
\begin{equation}\label{eq:kineticgradientflowenergy}
	E[\rho]=\int_{\Omega} \brk{H(\rho) + V\rho + \frac{1}{2}\prt{W\conv\rho}\rho}\dbx,
\end{equation}
see \cite{C.M.V2003,C.M.V2006}. In the 2-Wasserstein sense, this means that \cref{eq:kineticgradientflow} can be written as the non-linear continuity equation
\begin{equation}
	\pt \rho + \div\prt*{\rho \bu} = 0,\quad
	\bu =-\grad \xi,\quad
	\xi = \vder{E}{\rho},
\end{equation}
where $\vder{E}{\rho}$ is the first variation of the energy functional. \revisionOne{The variation is given by
	\begin{equation}
		\left.\der{}{\varepsilon} E\brk*{\rho + \varepsilon h}\right|_{\varepsilon=0} = \int_\Omega
		\vder{E}{\rho} h
		\dbx,
	\end{equation}
	for any $h$ such that $\int_\Omega h\dbx = 0$.} This structure leads to the dissipation of the energy along solutions of the equation,
\begin{equation}\label{eq:energyDissipation}
	\der{E}{t} = -\int_{\Omega} \rho\abs{\nabla \xi}^2\,\dd{\bx}\leq 0,
\end{equation}
and thus $E$ is a Lyapunov functional for the problem. Due to the relevance of \cref{eq:kineticgradientflow} in applications, numerical solutions which preserve this dissipation property are desirable; indeed \cite{B.C.H2020} introduces schemes to that effect.

\Cref{eq:kineticgradientflowsaturation} can no longer be obtained from the usual Wasserstein setting, but can nevertheless be formally cast as a gradient flow by writing
\begin{equation}
	\pt \rho + \div\prt*{\rho \psi\prt{\rho} \bu} = 0,\quad
	\bu =-\grad \xi,\quad
	\xi = \vder{E}{\rho},
\end{equation}
yielding the dissipation rate
\begin{equation}\label{eq:energyDissipationsaturation}
	\der{E}{t} = -\int_{\Omega} \rho\psi\prt{\rho}\abs{\nabla \xi}^2\,\dd{\bx}\leq 0
\end{equation}
for the energy \eqref{eq:kineticgradientflowenergy}. Under certain conditions on the saturation $\psi(\rho)$, \Cref{eq:kineticgradientflowsaturation} can be understood as a gradient flow with a suitably modified distance between probability measures \cite{CLSS10}.
As in the previous case, numerical schemes that preserve this dissipation structure are desirable; these will be introduced in \cref{sec:newScalarScheme}.

\revisionTwo{
	An interesting property of the gradient-flow structure of these equations is that the energy dissipation rate characterises their steady states.
	In the Wasserstein case \eqref{eq:kineticgradientflow}, the stationary solutions $\rho_\infty$ correspond to $\nabla \xi_\infty \equiv 0$ on their support.
	Likewise, in the case with saturation \eqref{eq:kineticgradientflowsaturation}, the steady states $\rho_\infty$ verify $\nabla \xi_\infty \equiv 0$ on the support of $\rho_\infty\psi\prt{\rho_\infty}$.
}

\subsection{Bounds on the Solution}\label{sec:newScalarBounds}

An important aspect in the study of \cref{eq:kineticgradientflowsaturation} are the bounds on the solution $\rho$. If the datum satisfies the bounds $0 \leq \rho_0(\bx) \leq \alpha$, we might hope that the solution would satisfy them too, provided sufficient assumptions on $\psi$, $H$, $V$, and $W$. This, in fact, is crucial to establishing any gradient structure, as the dissipation rate \eqref{eq:energyDissipationsaturation} hinges on the non-negativity of $\rho\psi\prt{\rho}$ (just as, in the case without saturation, the dissipation \eqref{eq:energyDissipation} relies on the non-negativity of $\rho$). Unfortunately, proving such bounds is no trivial matter, and it often requires a \revisionOne{case-by-case} analysis. Moreover, when we consider the extension of \cref{eq:kineticgradientflowsaturation} to systems in \cref{sec:newGradientFlowsSystems}, we will find that the existing literature is very sparse.

While a rigorous analysis is beyond the scope of this work, we will invoke the comparison principle for conservation laws of \cite{H.R.T1995,H.R2015} to justify the bounds in certain settings. For \cref{eq:kineticgradientflowsaturation}, the bounds $0 \leq \rho(t,\bx) \leq \alpha$ are satisfied if all of the following conditions hold:
\begin{enumerate}
	\item \cref{eq:kineticgradientflowsaturation} is posed on $\Rd$, or on a domain $\Omega$ with periodic boundary conditions;
	\item the datum satisfies the bounds $0 \leq \rho_0(\bx) \leq \alpha$;
	\item $\rho \psi\prt{\rho} \bu$ is continuous at $\rho = 0$ and $\rho = \alpha$;
	\item \revisionOne{$\div \prt{\rho \psi\prt{\rho} \bu}$} exists.
\end{enumerate}

The third and fourth conditions depend on the choice of $\psi$, $H$, $V$, and $W$. The common examples of saturations discussed above, $\psi(s)=\alpha - s$ and $\psi(s)=(\alpha - s)|\alpha - s|^{m-1}$, are both compatible. In applications, the diffusion term is typically $H(\rho)=\rho\prt{\log\rho-1}$ (the Boltzmann entropy, \revisionOne{often associated with the heat equation}) or $H(\rho)=\rho^m/(m-1)$ for some $m>1$ (\revisionOne{associated with the porous-medium equation}). These forms present no difficulty at $\rho=\alpha$, but require attention at $\rho=0$. In the \revisionOne{$H(\rho)=\rho\prt{\log\rho-1}$} case, it is preferable to rewrite the equation as $\pt\rho=\px\prt*{\psi(\rho)\px \rho}$ and to apply parabolic theory in order to obtain lower bounds. In the \revisionOne{$H(\rho)=\rho^m/(m-1)$} case, the question of regularity at the origin is quite involved, see \cite{Vazquez2006} for an exhaustive survey. This complexity permeates also the analysis of numerical schemes, as explored in \cite{B.C.M+2020}, which establishes the convergence of the scheme of \cite{B.C.H2020}. Concerning the potentials, it is sufficient to assume $V, W\in C^2(\Omega)$. This is stringent and could be relaxed in some cases; for instance, if the form of $H$ leads to establishing some regularity on the solution, it might be possible to interpret $\px\prt{-\px \prt{W\conv\rho}}$ as $-\px W\conv \px \rho$, requiring milder assumptions on $W$.

\subsection{Numerical Schemes}\label{sec:newScalarScheme}

If the bounds have been established on the solutions of \cref{eq:kineticgradientflowsaturation}, we can improve the numerical schemes for gradient flows from \cite{B.C.H2020} to include the problem with saturation by constructing a suitable discretisation of $\psi$. The crucial question we address here is the choice of the upwinding of the flux which preserves the saturation bounds. Two schemes were given in \cite{B.C.H2020}: schemes S1, and S2. In the interest of brevity, we describe the extension of S2 only, though the S1 can be similarly improved.

We will prescribe a full discretisation in the finite-volume setting in one dimension. The spatial domain $\prt{-L, L}$ is divided into $2M$ uniform volumes of size $\Dx = L/M$. We shall define the points $x\i = \Dx (i-1/2) - L$, $i=1,\cdots,2M$ to construct the $i$\th cell $C_i=\prt{x\imh, x\ih}$ with centre $x\i$. The time domain $\prt{0, T}$ is discretised through equispaced points with separation $\Dt=T/N$, with the $n$\th point given by $t\n=n\Dt$, $n=0,\cdots,N$.

The solution $\rho(t,x)$ is to be approximated by a function defined at each time step and constant over each cell; we denote by $\rho\i\n$ its value at time $t\n$ over the cell $C_i$. The new scheme reads:
\begin{subequations}\label{eq:S2saturation}
\begin{align+}
& \frac{\rho\i\np - \rho\i\n}{\Dt} + \frac{F\ih\np - F\imh\np}{\Dx} = 0, \\
& F\ih\np = \rho\i\np \pos{\psi\ip\np} \pos{u\ih\np} + \rho\ip\np \pos{\psi\i\np} \neg{u\ih\np}, \\
& u\ih\np=-\frac{\xi\ip\np-\xi\i\np}{\Dx},
\quad
\xi\i\np=H'(\rho\i\np)+V\i+(W\ast\rho\nss)\i,
\end{align+}
\end{subequations}
where $\pos{s}\coloneqq \max\set{0,s}$ and $\neg{s}\coloneqq \min\set{0,s}$. The saturation terms are given by $\psi\i\n = \psi\prt{\rho\i\n}$. The confining potential terms are defined as $V\i=V(x\i)$ or $V\i=\revisionOne{\frac{1}{\Dx}}\int_{C\i}V(s)\ds$. The convolution is given by $(W\ast\rho\nss)\i=\sum_{k}\Wik\rho_k\nss\Dx$, where $\Wik=W(x\i-x\k)$ or $\Wik=\revisionOne{\frac{1}{\Dx}}\int_{C\k}W(x\i-s)\ds$, and $\rho\nss=\prt{\rho\np+\rho\n}/{2}$; in practice, the sum will be evaluated through a fast Fourier transform. This scheme is first-order accurate in both time and space.

\revisionOne{We will often consider no-flux boundary conditions for the scheme, given by} $F_{\nhalf}\np = 0$ and $F_{2M+\nhalf}\np = 0$. These are beyond the scope of the theory of \cref{sec:newScalarBounds}, and therefore the bounds on the solution to the continuous problem are not justified; nevertheless, if the no-flux problem approximates the problem on the whole space well, a similar behaviour can be expected.

We begin the analysis of the scheme by proving it preserves the $0\leq\rho\leq\alpha$ bounds unconditionally:
\begin{proposition}[Boundedness and non-negativity]\label{th:implicitpositivity}
	For a given $n$, suppose $0\leq\rho\i\n\leq\alpha$ for all $i$. Then scheme \eqref{eq:S2saturation} satisfies $0\leq\rho\i\np\leq\alpha$ bounds unconditionally for all $i$.
\end{proposition}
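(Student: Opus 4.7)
The strategy is to establish both bounds through a discrete minimum/maximum principle argument that exploits the upwinded sign structure of the flux and, crucially, the truncation $\pos{\cdot}$ built into the scheme. First, I would recast \cref{eq:implicit} in a transparent form by setting $a\ih \coloneqq \pos{\psi\ip\np}\, u\ih^+ \ge 0$ and $b\ih \coloneqq -\pos{\psi\i\np}\, u\ih^- \ge 0$, so that $F\ih\np = a\ih\,\rho\i\np - b\ih\,\rho\ip\np$. The scheme then becomes a tridiagonal (nonlinear) system $A(\brho\np)\,\brho\np = \brho\n$ whose diagonal entry is $1 + \tfrac{\Dt}{\Dx}\prt{a\ih + b\imh}$, sub-diagonal $-\tfrac{\Dt}{\Dx}\,a\imh$, and super-diagonal $-\tfrac{\Dt}{\Dx}\,b\ih$, with the no-flux conditions $F_{\nhalf}\np = F_{M+\nhalf}\np = 0$ simply removing the corresponding flux terms from the first and last rows.

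For the lower bound, I would observe that a direct column-sum computation---using the telescoping of the interior $a$ and $b$ contributions together with the cancellation provided by the no-flux boundaries---shows that every column of $A(\brho\np)$ sums to exactly $1$. Combined with the sign pattern, this makes $A(\brho\np)^{\top}$ a Z-matrix that is strictly row-diagonally dominant with positive diagonal, hence a non-singular M-matrix; consequently $A(\brho\np)^{-1}$ is entrywise non-negative and $\brho\np = A(\brho\np)^{-1}\brho\n \ge 0$ whenever $\brho\n \ge 0$.

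For the upper bound, the truncation plays the decisive role. Suppose, for a contradiction, that $\rho_{i_*}\np = \max_i \rho\i\np > \alpha$ at some index $i_*$. By \cref{def:saturation}, $\psi(\rho_{i_*}\np) < 0$, so $\pos{\psi_{i_*}\np} = 0$; this forces both $a_{i_*-\nhalf} = 0$ and $b_{i_*+\nhalf} = 0$, killing the contributions of $\rho_{i_*-1}\np$ and $\rho_{i_*+1}\np$ in the equation at $i = i_*$, which collapses to
\[
\prt*{1 + \tfrac{\Dt}{\Dx}\prt{a_{i_*+\nhalf} + b_{i_*-\nhalf}}}\rho_{i_*}\np \;=\; \rho_{i_*}\n \;\le\; \alpha.
\]
Since the prefactor is $\ge 1$ and $\rho_{i_*}\np > \alpha > 0$, the left-hand side exceeds $\alpha$, contradicting the hypothesis and yielding the bound without any restriction on $\Dt$.

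The principal obstacle I anticipate is conceptual rather than technical: because $A(\brho\np)$ depends nonlinearly on $\brho\np$ through the truncated saturation, the system is not a plain linear equation, and one cannot argue by direct matrix inversion from fixed data. The argument sidesteps this by being \emph{agnostic} to the precise values of the $a\ih$ and $b\ih$---only their non-negativity, and the vanishing at a maximiser induced by the truncation, are invoked---so once existence of a solution to the nonlinear implicit system is granted (a separate question), the bounds follow structurally and unconditionally in $\Dt$.
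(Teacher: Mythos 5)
Your proposal is correct, but it proves the result by a genuinely different route from the paper. The paper argues both bounds by contradiction on a \emph{contiguous cluster} of pathological cells: it sums the scheme over the cluster, so that all interior fluxes telescope away, and then reads off a sign contradiction from the two boundary fluxes $-F_{k+1/2}^{n+1}$ and $F_{j-1/2}^{n+1}$, using the truncation $\pos{\cdot}$ and the sign of the cluster values. You instead split the two bounds: non-negativity via inverse-positivity of the tridiagonal matrix $A(\rho^{n+1})$ (unit column sums plus the Z-matrix sign pattern make $A^\top$ a strictly diagonally dominant M-matrix), and the upper bound via a purely local argument at a single over-saturated cell, where $\pos{\psi_{i_*}^{n+1}}=0$ decouples the $i_*$-th equation from its neighbours. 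Both of your steps check out: the coefficients $a_{i+1/2},b_{i+1/2}$ are non-negative for \emph{any} candidate solution precisely because of the truncation, so the a posteriori matrix argument is legitimate once existence is granted (an assumption the paper's proof also makes tacitly), and your upper-bound step correctly never uses maximality of $i_*$, only $\rho_{i_*}^{n+1}>\alpha$. The comparison is instructive in both directions. The paper's cluster argument is more elementary and treats the two bounds symmetrically; note that the clustering is genuinely needed for its lower bound, since a negative $\rho_i^{n+1}$ kills no coefficient, whereas your M-matrix route handles that case globally and is exactly the mechanism the authors invoke in their Remark on strict bounds to upgrade $\leq$ to $<$. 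Conversely, your local upper-bound argument is arguably cleaner than the cluster version, and — unlike the M-matrix proof of boundedness sketched in that same Remark, which requires the additional assumption on $\psi$ near $\alpha$ from the preceding Remark on the CFL condition — it needs nothing beyond \cref{def:saturation}.
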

\begin{proof}
	We will prove non-negativity followed by boundedness. Seeking a contradiction, we will assume $\rho\i\np < 0$ or $\rho\i\np > \alpha$ for some values of $i$. Without loss of generality we may assume that these ``pathological'' values lie on contiguous cells; if there are two or more separate pathological clusters, the proof we present below may be applied several times to deal with each one individually. Thus we may simply assume the values in question are $\rho\j\np, \rho\jp\np, \cdots, \rho\k\np$. Summing scheme \eqref{eq:S2saturation} over the corresponding cells yields
	\begin{equation}\label{eq:schemecontradiction}
		\sum_{i=j}^{k}\prt{\rho\i\np-\rho\i\n}\frac{\Dx}{\Dt}
		= - F\kh\np + F\jmh\np.
	\end{equation}

	To prove non-negativity, we assume $\rho\i\np$ is strictly negative for $j\leq i\leq k$. Since $\rho\i\n\geq 0$, the left hand side of \cref{eq:schemecontradiction} is strictly negative also. The right hand side is comprised of
	\revisionOne{
		\begin{align}
			- F\kh\np+F\jmh\np & =
			- {\rho\k\np} \pos{\psi\kp\np} u\kh^+ - {\rho\kp\np} \pos{\psi\k\np} u\kh^-
			\\&\qquad
			+
			{\rho\jm\np} \pos{\psi\j\np} u\jmh^+ + {\rho\j\np} \pos{\psi\jm\np} u\jmh^-.
		\end{align}
	}
	The first and fourth terms, $- {\rho\k\np} \pos{\psi\kp\np} u\kh^+$ and ${\rho\j\np} \pos{\psi\jm\np} u\jmh^-$, are non-negative, since $\rho\k\np,\rho\j\np<0$; the
	remaining terms are also non-negative. This follows because, outside the range $j\leq i\leq k$, the densities $\rho\i\np$ are non-negative (whether the upper bound is respected here is of no consequence). Therefore, the right hand side of \cref{eq:schemecontradiction} is also shown to be non-negative, producing a contradiction.

	Boundedness is proven in a similar fashion. We assume $\rho\i\np$ is strictly larger than $\alpha$ for $j\leq i\leq k$, resulting on $\psi\i\np\leq 0$, and rendering the left hand side of \cref{eq:schemecontradiction} strictly positive. On the right hand side, the second and third terms, $- {\rho\kp\np} \pos{\psi\k\np} u\kh^-$ and ${\rho\jm\np} \pos{\psi\j\np} u\jmh^+$, are identically zero because the saturation is non-positive. Furthermore, the first and fourth terms, $- {\rho\k\np} \pos{\psi\kp\np} u\kh^+$ and ${\rho\j\np} \pos{\psi\jm\np} u\jmh^-$, are non-positive. Thus, the right hand side is also non-positive, yielding a contradiction.
\end{proof}

\begin{remark}[Strict bounds]\label{th:strictBounds}
	It is possible to show a strict inequality version of \cref{th:implicitpositivity}: for a given $n$, suppose $0<\rho\i\n<\alpha$ for all $i$; then scheme \eqref{eq:S2saturation} satisfies $0<\rho\i\np<\alpha$ unconditionally for all $i$. This can be accomplished through an M-matrix argument, as was done for the positivity results in \cite{B.C.H2020}; for positivity, one proves $A\rho\np = \rho\n$ where $A$ is an inverse-positive matrix, whereas for boundedness one shows $B [\alpha\bone-\rho\np] = \alpha\bone-\rho\n$, where $B$ is also inverse positive and $\bone=\prt{1,\cdots,1}^\top$. However, for the latter, one needs the additional assumptions on the saturation; namely, that the quantity $\prt{\alpha - \rho}/{\alpha \psi\prt{\rho}}$ is bounded away from zero in the $\rho\goes\alpha$ limit. This is satisfied by the typical saturations mentioned above, $\psi(s)=\alpha - s$ and $\psi(s)=(\alpha - s)|\alpha - s|^{m-1}$ \revisionOne{for $m>1$}.
\end{remark}

Once the proof of bounds has been accomplished, we are ready to show the most important property of the scheme: that it preserves the energy dissipation structure of \Cref{eq:kineticgradientflowsaturation} unconditionally. We will define the discrete counterpart of the energy \eqref{eq:kineticgradientflowenergy} as
\begin{equation}\label{eq:kineticgradientflowenergydiscrete}
	E_\Delta[\rho\n] =
	\sum_{i=1}^{2M} H\prt{\rho\i\n} \Dx
	+\sum_{i=1}^{2M} V\i\rho\i\n \Dx
	+\frac{1}{2}\sum_{i=1}^{2M}\sum_{k=1}^{2M}\Wik\rho\i\n\rho\k\n \Dx^2,
\end{equation}
and show, as a straightforward corollary to \cite[Theorem 3.9]{B.C.H2020}, the dissipation result:
\begin{theorem}[Energy dissipation]\label{th:energydissipationS2}
	Scheme \eqref{eq:S2saturation} satisfies a fully discrete version of the energy dissipation property \eqref{eq:energyDissipation} unconditionally; namely, the discrete energy \eqref{eq:kineticgradientflowenergydiscrete} satisfies
	\begin{equation}
		\frac{E_\Delta[\rho\np] - E_\Delta[\rho\n]}{\Dt}
		\leq
		-\sum_{i=1}^{2M-1} \min\set*{\rho\i\np \pos{\psi\ip\np}, \rho\ip\np \pos{\psi\i\np}}|u\ih\np|^2 \Dx \leq 0.
	\end{equation}
\end{theorem}
\begin{proof}
	The proof of \cite[Theorem 3.9]{B.C.H2020} remains valid until the last two lines, where the inequality becomes
	\begin{align}
		     & \frac{E_\Delta[\rho\np]-E_\Delta [\rho\n]}{\Dt\Dx}                                                                     \\
		\leq & -\sum_{i=1}^{2M-1} \prt*{\rho\i\np \pos{\psi\ip\np} \pos{u\ih\np} + \rho\ip\np \pos{\psi\i\np} \neg{u\ih\np}} u\ih\np,
		\\ \leq & -\sum_{i=1}^{2M-1} \min\set*{\rho\i\np \pos{\psi\ip\np}, \rho\ip\np \pos{\psi\i\np}}|u\ih\np|^2
		\leq 0,
	\end{align}
	by virtue of \cref{th:implicitpositivity}.
\end{proof}

\revisionTwo{
	\begin{remark}
		\Cref{th:energydissipationS2} also characterises the steady states of the numerical scheme. By looking at the discrete dissipation rate, we see that the stationary condition is consistent to first-order with the continuous condition stated at the beginning of this section.
	\end{remark}
}

\begin{remark}[Second Order Schemes]
	It is also possible to develop structure-preserving schemes for \Cref{eq:kineticgradientflowsaturation} with second-order accuracy in space, as was done for \Cref{eq:kineticgradientflow} in \cite{CCH15} (a fully explicit scheme) and \cite{B.C.H2020} (a semi-implicit scheme). Both approaches will preserve the solution bounds, provided the CFL condition
	\begin{equation}\label{eq:explicitpositivity}
		\Dt \leq \Gamma \frac{\Dx}{2\max\i\abs{u\ih}},
		\quad
		\Gamma = \min\set*{\frac{1}{\psi(0)}, \gamma},
		\quad
		\gamma = \inf_{s\in\brk{0, \alpha}} \frac{\alpha - s}{\alpha\psi\prt{s}}
	\end{equation}
	is met. The explicit scheme will, however, not preserve the dissipative structure; dissipation can only be proven for a continuous-in-time, discrete-in-space scheme, as done in \cite{CCH15}. The semi-implicit scheme will preserve the energy dissipation property, but an implicit scheme with a CFL condition quickly becomes computationally impractical. The numerical example in \cref{sec:two-segments} demonstrates the advantages of the fully implicit scheme.
\end{remark}

\begin{remark}[Higher Dimensions]\label{th:higherdimensions}
	Scheme \eqref{eq:S2saturation} may be generalised directly to higher dimensions and, \textit{mutatis mutandis}, analogues of \cref{th:implicitpositivity,th:energydissipationS2} can be similarly proven. However, the computational cost of an implicit numerical scheme with non-local terms in multiple dimensions is impractically high. On the other hand, a direct dimensional splitting generalisation will capture the positivity and boundedness properties, but will not preserve the energy dissipation whenever the non-local terms are present.

	Yet, as was demonstrated in \cite{B.C.H2020}, there is a special form of dimensional splitting (\textit{sweeping dimensional splitting}) which generalises the one-dimensional scheme to the higher-dimensional setting and which retains the energy dissipation property even in the presence of the convolution term. That approach can be immediately adapted to our scheme, and we will employ it in all the examples of \cref{sec:numericalExperiments} to perform the computations efficiently.
\end{remark}
\section{Systems of Gradient Flows with Saturation}\label{sec:newGradientFlowsSystems}

We now turn our attention to the generalisation of \eqref{eq:kineticgradientflowsaturation}; the system of multiple species
\begin{equation}\label{eq:multispeciesgradientflow}
	\begin{cases}
		\pt \brho
		= \div\prt*{M\prt{\brho} \psi(\sigma) \grad\prt*{\vder{E}{\brho}}}
		= \sum_{l=1}^{d} \partial_{x\l}\prt*{M\prt{\brho} \psi(\sigma) \partial_{x\l}\prt*{\vder{E}{\brho}}},
		\\
		\brho(0,\bx) = \brho(\bx),
	\end{cases}
\end{equation}
for $t>0$ and $\bx\in\Omega\subseteq \mathbb{R}^d$.
This equation describes the evolution of the \textit{density} $\brho=\prt{\rho_1, \rho_2, \cdots, \rho_P}^\top$, a vector with entries $\rho_p\prt{t,\bx}: \Rplus\times\Omega \rightarrow \Rplus$ corresponding to the density of the $p$\th species. $M\prt{\brho}$ is a $P\times P$ positive semi-definite matrix (possibly non-symmetric). Additionally, $\psi(\s):\R^+\rightarrow\R$ is once again a \textit{saturation}, viz. Definition \ref{def:saturation}; in this case, the saturation depends on the \textit{total density} of the system, $\s\prt{t,\bx}\coloneqq\sum_{p=1}^{P} \rho_p\prt{t,\bx}$.

Once an energy functional $E[\brho]$ is prescribed, the system can be formally cast as a gradient flow by writing, as in the scalar case,
\begin{equation}
	\pt \brho + \div\prt*{M\prt{\brho} \psi\prt{\brho} \bu} = 0,\quad
	\bu =-\grad \bxi,\quad
	\bxi = \vder{E}{\brho},
\end{equation}
yielding the formal dissipation rate
\begin{equation}\label{eq:energyDissipationsaturationSystem}
	\der{}{t}E\brk{\brho}
	=
	\revisionOne{
		-\sum_{l=1}^{d}
		\int_\Omega \psi\prt{\sigma} \partial_{x\l}\prt*{\vder{E}{\brho}} \cdot M\prt{\brho} \partial_{x\l}\prt*{\vder{E}{\brho}} \dbx
	}
	\leq 0,
\end{equation}
since $M\prt{\brho}$ is positive semi-definite.

\revisionTwo{
	Just as in the scalar case, the dissipation rate characterises the steady states of \eqref{eq:multispeciesgradientflow}. The steady states $\brho_\infty$ verify $M(\brho_\infty)\partial_{x_l}\bxi_{\infty}\equiv 0$ for every $l$ on the support of $\psi(\sigma_\infty)$.
}

For brevity, we will consider here the case of two species, though the general case can be handled identically, and all the results shown in this section generalise immediately. Letting $\brho = \prt{\rho, \eta}^\top$, and $\sigma=\rho+\eta$, we define the energy functional as
\begin{align}\label{eq:systemenergy}
	E[\brho]       & = \curlyH[\brho] + \curlyV[\brho] + \curlyW[\brho],
	\\
	\curlyH[\brho] & = \int_\Omega \brk*{ H_\rho(\rho) + H_\eta(\eta) + H_\sigma(\sigma) }\dx,                                                            \\
	\curlyV[\brho] & = \int_\Omega \brk*{ V_\rho\rho + V_\eta\eta }\dx,                                                                                   \\
	\curlyW[\brho] & = \int_\Omega \brk*{ \frac{1}{2}\rho\prt{W_\rho\conv\rho} + \frac{1}{2}\eta\prt{W_\eta\conv\eta} + \rho\prt{W_\sigma\conv\eta} }\dx.
\end{align}
The internal energy densities $H_\rho(\rho)$, $H_\eta(\eta)$, and $H_\sigma(\sigma)$ are convex functions which model diffusion, respectively, on the first species, on the second, and on the total density of the system. The confining potentials $V_\rho$ and $V_\eta$ model external forces acting on each of the species. The symmetric interaction potentials $W_\rho$, $W_\eta$, and $W_\sigma$, model attraction or repulsion between particles; $W_\rho$ and $W_\eta$ represent the forces within each of the species, whereas $W_\sigma$ models the interaction between the species, assumed here to be symmetric.

\subsection{Bounds on the Solution}\label{sec:newSystemBounds}

As in the scalar case, a crucial aspect in the study of \cref{eq:multispeciesgradientflow} are the bounds on the solution. Here, for initial data such that $0 \leq \rho_0, \eta_0$ and $\sigma_0 \leq \alpha$, we might hope to find that the solution satisfies $0 \leq \rho, \eta$ and $\sigma \leq \alpha$. Once again, this is crucial to establishing any gradient structure.

We will make the necessary assumptions to apply the comparison principle of \cite{H.R.T1995} and establish the bounds. While the non-negativity of each species is found directly, the $\sigma \leq \alpha$ bound requires casting the system in a different form. \Cref{eq:multispeciesgradientflow} can be written as
\begin{equation}\label{eq:twoSystem}
	\begin{cases}
		\pt \rho + \div\prt{\rho\psi(\sigma) \svec{v}}=0, \\
		\pt \eta + \div\prt{\eta\psi(\sigma) \svec{w}}=0,
	\end{cases}
	\quad\text{where}\quad
	\begin{pmatrix}
		\svec{v} \\ \svec{w}
	\end{pmatrix} =
	\revisionOne{-}
	\diag\prt{\brho}^{-1} M\prt{\brho} \grad\prt*{\vder{E}{\brho}};
\end{equation}
it is important to ensure that these velocities are well-defined, either by making assumptions on $M$, by considering the case without cross-diffusion ($M\prt{\brho}=\diag\prt{\brho}$), or by establishing that the densities are bounded away from zero \textit{a priori}. The key to the upper bound is to replace $\sigma$ by $\alpha-\theta$, where $\alpha$ is the saturation level and $\theta$ is a new, independent variable. Letting $\tilde\psi(\theta) = \psi(\alpha-\theta)$, we define a new system
\begin{equation}\label{eq:twoSystemMod}
	\begin{cases}
		\pt \rho + \div\brk{\rho\tilde\psi(\theta) \svec{v}}=0, \\
		\pt \eta + \div\brk{\eta\tilde\psi(\theta) \svec{w}}=0, \\
		\pt \theta - \div\brk{\tilde\psi(\theta)
			\prt*{\rho \svec{v} + \eta \svec{w}}
		}=0.
	\end{cases}
\end{equation}
Crucially, any weak solution $(\rho,\eta,\theta)$ with datum $\rho_0+\eta_0+\theta_0\equiv\alpha$ satisfies $\rho+\eta+\theta\equiv\alpha$, establishing a one-to-one correspondence between solutions of \cref{eq:twoSystem} and solutions of \cref{eq:twoSystemMod} via $\sigma\equiv \alpha-\theta$.

The new system is in the correct form to apply the comparison principle of \cite{H.R.T1995}. We therefore recover, for any datum $\rho_0,\eta_0,\theta_0>0$, a viscosity solution $\rho,\eta,\theta>0$, understood here as the $\varepsilon\rightarrow 0$ limit of the solutions to the family of parabolic systems
\begin{equation}\label{eq:twoSystemModVisc}
	\begin{cases}
		\pt \rho + \div\brk{\rho\tilde\psi(\theta) \svec{v}} = \varepsilon \laplace\rho, \\
		\pt \eta + \div\brk{\eta\tilde\psi(\theta) \svec{w}} = \varepsilon \laplace\eta, \\
		\pt \theta - \div\brk{\tilde\psi(\theta)
			\prt*{\rho \svec{v} + \eta \svec{w}}
		} = \varepsilon \laplace\theta.
	\end{cases}
\end{equation}
In particular, solutions to \cref{eq:twoSystemModVisc} with datum $\rho_0+\eta_0+\theta_0\equiv\alpha$ correspond to solutions $\prt{\rho,\eta}$ to \cref{eq:twoSystem} which satisfy $0<\rho,\eta$ and $\rho+\eta<\alpha$. Moreover, these are also viscosity solutions of \eqref{eq:twoSystem}, which can be verified by establishing a similar correspondence between \cref{eq:twoSystemModVisc} and the parabolic version of \cref{eq:twoSystem},
\begin{equation}\label{eq:twoSystemVisc}
	\begin{cases}
		\pt \rho + \div\prt{\rho\psi(\sigma) \svec{v}}=\varepsilon \laplace\rho, \\
		\pt \eta + \div\prt{\eta\psi(\sigma) \svec{w}}=\varepsilon \laplace\eta.
	\end{cases}
\end{equation}
As a result, provided the assumptions stated in \cref{sec:newScalarBounds} are met, the solution to \cref{eq:multispeciesgradientflow} satisfies $0 \leq \rho, \eta$ and $\sigma \leq \alpha$.

\subsection{Numerical Schemes}\label{sec:newSystemScheme}

We can now extend scheme \eqref{eq:S2saturation} to the case of systems. The new scheme reads:
\begin{subequations}\label{eq:gradientSystemScheme}
\begin{align+}
& \frac{\brho\i\np-\brho\i\n}{\Dt} + \frac{\bF\ih\np-\bF\imh\np}{\Dx} = 0, \\
& \bF\ih\np = \diag\prt{{\brho\i\np}} \pos{\psi\ip\np} \pos{\bu\ih\np} + \diag\prt{{\brho\ip\np}} \pos{\psi\i\np} \neg{\bu\ih\np}, \\
& \bu\ih\np = D\ih\np \bv\ih\np, \quad \bv\ih\np = M\prt{\brho\ih\np} \bg\ih\np, \\
& \bg\ih\np = -\frac{ \bxi\ip\np - \bxi\i\np }{\Dx}
\end{align+}
where $\brho\ih\np = \prt{\brho\i\np + \brho\ip\np} / 2$. The terms $\pos{s}$ and $\neg{s}$ are as described in \cref{sec:newScalarScheme}, and defined entry-wise when applied to a vector.
The saturation terms are given by $\psi\i\n = \psi\prt{\sigma\i\n}$, where $\s\i\n=\sum_{p=1}^{P}\prt{\brho\i\n}_p$.
The diagonal matrix $D\ih\np$ is defined as
\begin{equation+}
D\ih\np =
\diag\prt{{\brho\i\np}}^{-1}
\He\prt{\bv\ih\np}
+
\diag\prt{{\brho\ip\np}}^{-1}
\He\prt{-\bv\ih\np},
\end{equation+}
where $\He$ is the Heaviside step function, applied to the vector $\bv\ih\np$ entry-wise.

In the two species case, the entropy variable $\xi_{\eta, i}\np$ is given by
\begin{align+}
& \bxi\i\np = \prt*{\xi_{\rho, i}\np, \xi_{\eta, i}\np}^\top, \\
& \xi_{\rho, i}\np =
H_\rho'\prt{\rho\i\np}
+ H_\sigma'\prt{\sigma\i\np}
+ V_{\rho,i}
+ (W_\rho\ast\rho\nss)\i
+ (W_\sigma\ast\eta\nss)\i, \\
& \xi_{\eta, i}\np =
H_\eta'\prt{\eta\i\np}
+ H_\sigma'\prt{\sigma\i\np}
+ V_{\eta,i}
+ (W_\eta\ast\eta\nss)\i
+ (W_\sigma\ast\rho\nss)\i,
\end{align+}
The terms of $V_{\rho,i}$, $V_{\eta,i}$, $(W_\rho\ast\rho\nss)\i$, $(W_\eta\ast\eta\nss)\i$, $(W_\sigma\ast\rho\nss)\i$, $(W_\sigma\ast\eta\nss)\i$, $\rho\nss$, and $\eta\nss$ are defined analogously to the terms of scheme \eqref{eq:S2saturation}, and the spatial discretisation is identical. These entropy terms can be immediately generalised to any number of species, and all the results presented in this section hold.
\end{subequations}

\begin{remark}
	In the absence of cross-diffusion effects, i.e. if $M\prt{\brho}=\diag\prt{\brho}$, the scheme can be simplified to bypass the matrix $D\ih\np$ altogether:
	\begin{align}
		 & \frac{\brho\i\np-\brho\i\n}{\Dt} + \frac{\bF\ih\np-\bF\imh\np}{\Dx} = 0,                                                         \\
		 & \bF\ih\np = \diag\prt{{\brho\i\np}} \pos{\psi\ip\np} \pos{\bu\ih\np} + \diag\prt{{\brho\ip\np}} \pos{\psi\i\np} \neg{\bu\ih\np}, \\
		 & \bu\ih\np = -\frac{ \bxi\ip\np - \bxi\i\np }{\Dx}.
	\end{align}
	Nevertheless, \cref{th:implicitpositivitysystem,th:energydissipationsystem} will be proven for the general version.
\end{remark}

\revisionTwo{
	\begin{remark}\label{th:vacuum}
		The definition of $D\ih\np$ can be altered slightly to handle the cases with vacuum:
		\begin{equation}
			D\ih\np =
			\diag\prt{\max\set{\brho\i\np,\varepsilon}}^{-1}
			\He\prt{\bv\ih\np}
			+
			\diag\prt{\max\set{\brho\ip\np,\varepsilon}}^{-1}
			\He\prt{-\bv\ih\np},
		\end{equation}
		where the maximum is applied entry-wise, and where $\varepsilon$ is a small parameter. In the simulations of \cref{sec:SKT}, we take $\varepsilon$ as the machine precision.
	\end{remark}
}

We begin the analysis by showing that this new scheme preserves the $0 \leq \brho$ and $\sigma \leq \alpha$ bounds unconditionally:
\begin{proposition}[Boundedness and non-negativity]\label{th:implicitpositivitysystem}
	For a given $n$, suppose $0\leq \brho\i\n$ (entry-wise) as well as $\s\i\n\leq\alpha$, for all $i$. Then scheme \eqref{eq:gradientSystemScheme} satisfies $0\leq \brho\i\np$ as well as $\s\i\np\leq\alpha$, unconditionally for all $i$.
\end{proposition}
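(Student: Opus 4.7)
The plan is to mirror the two-step contradiction argument of \cref{th:implicitpositivity}, first establishing the entry-wise non-negativity of $\brho\i\np$ and then the upper bound on $\s\i\np$, exploiting in each step the truncation $\pos{\psi\i\np}$ present in the numerical flux.

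For non-negativity of species $p$, I would observe that the $p$\th component of scheme \eqref{eq:implicitsystem} reads
\begin{align*}
\frac{\prt{\brho\i\np}_p-\prt{\brho\i\n}_p}{\Dt} + \frac{\prt{\bF\ih\np}_p-\prt{\bF\imh\np}_p}{\Dx} = 0,
\end{align*}
with $\prt{\bF\ih\np}_p = \prt{\brho\i\np}_p \pos{\psi\ip\np} \pos{\bu\ih}_p + \prt{\brho\ip\np}_p \pos{\psi\i\np} \neg{\bu\ih}_p$. This is structurally identical to the scalar implicit flux: the only positive-sign factor coming from the saturation is $\pos{\psi\i\np}\geq 0$, and both velocity splittings $\pos{\bu\ih}_p$ and $\neg{\bu\ih}_p$ keep their signs. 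Consequently, the proof of \cref{th:implicitpositivity} transfers verbatim: assuming for contradiction a contiguous cluster of cells $j\leq i \leq k$ on which $\prt{\brho\i\np}_p<0$, summing over the cluster gives a strictly negative left-hand side while each of the four boundary flux contributions at $j-\nhalf$ and $k+\nhalf$ is sign-controlled by $\pos{\psi}\geq 0$, the splitting $\pos{\bu},\neg{\bu}$, and the non-negativity of $\prt{\brho\i\np}_p$ on cells outside the cluster (by definition of the cluster). The boundary case is absorbed by the no-flux condition $\bF_{1/2}\np=\bF_{M+1/2}\np=\svec{0}$.

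For the upper bound on $\s\i\np$, I would now sum scheme \eqref{eq:implicitsystem} over the species index $p$ to produce a scalar equation for $\s\i\np$,
\begin{align*}
\frac{\s\i\np-\s\i\n}{\Dt} + \frac{G\ih\np-G\imh\np}{\Dx} = 0,
\quad
G\ih\np = \pos{\psi\ip\np} \sum_{p=1}^P \prt{\brho\i\np}_p \pos{\bu\ih}_p + \pos{\psi\i\np} \sum_{p=1}^P \prt{\brho\ip\np}_p \neg{\bu\ih}_p,
\end{align*}
and again run the contradiction argument on a contiguous cluster $j\leq i\leq k$ where $\s\i\np>\alpha$. The left-hand side, after summation, is strictly positive since $\s\i\n\leq\alpha$. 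The key observation is that on the cluster the definition of saturation (\cref{def:saturation}) together with $\psi$ non-increasing and $\psi(\alpha)=0$ force $\psi\i\np<0$, so $\pos{\psi\i\np}=0$ for $j\leq i\leq k$. Therefore the terms $\pos{\psi\k\np}\sum_p\prt{\brho\kp\np}_p\neg{\bu\kh}_p$ inside $G\kh\np$ and $\pos{\psi\j\np}\sum_p\prt{\brho\jm\np}_p\pos{\bu\jmh}_p$ inside $G\jmh\np$ vanish, while the two surviving contributions have definite sign thanks to the already-established non-negativity of $\brho\np$ on neighbouring cells and the sign-splitting of $\bu\ih$. Collecting signs yields $-G\kh\np + G\jmh\np\leq 0$, the desired contradiction.

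The only genuinely non-trivial point is the ordering: the upper-bound argument consumes the species-wise non-negativity when controlling the signs of the sums $\sum_p\prt{\brho\i\np}_p\pos{\bu}_p$ and $\sum_p\prt{\brho\i\np}_p\neg{\bu}_p$ at the cluster's boundary; hence non-negativity must be proven first. This is in fact what makes the cascade work: at no point does one need a \emph{priori} bounds on $\s\np$ to establish non-negativity, because the $\pos{\cdot}$ truncation of $\psi$ cleanly decouples the two properties. No CFL condition enters either step, so the result is unconditional in $\Dt$.
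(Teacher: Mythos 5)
Your proposal is correct and follows essentially the same route as the paper's proof: a contradiction argument on a contiguous cluster of pathological cells, run species-by-species for non-negativity and then on the species-summed equation for the bound on $\s\i\np$, with the $\pos{\psi\i\np}$ truncation killing two of the four boundary flux terms and the previously established non-negativity controlling the sign of the other two. Your closing observation on the ordering of the two steps (and the decoupling provided by the positive-part truncation of $\psi$) is exactly the structure the paper relies on, so there is nothing further to add.
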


\begin{proof}
	We will argue by contradiction, proving non-negativity followed by boundedness. As in the proof of \cref{th:implicitpositivity}, we will only deal with one cluster of contiguous pathological values, $j\leq i\leq k$. Summing scheme \eqref{eq:gradientSystemScheme} over the corresponding cells yields
	\begin{equation}\label{eq:schemesystemcontradiction}
		\sum_{i=j}^{k}\prt{\brho\i\np-\brho\i\n}\frac{\Dx}{\Dt}
		= - \bF\kh\np + \bF\jmh\np.
	\end{equation}

	We prove non-negativity one species at a time. Given $p$, suppose $\prt{\brho\i\np}_p$ is strictly negative for $j\leq i\leq k$, and non-negative otherwise. Whether the entries outside the pathological range satisfy the boundedness property is, for this part, irrelevant, as are the values of the other species. The $p$\th entry of \cref{eq:schemesystemcontradiction} is given by
	\begin{equation}\label{eq:schemesystemcontradictionentry}
		\sum_{i=j}^{k}\brk*{\prt{\brho\i\np}_p-\prt{\brho\i\n}_p}\frac{\Dx}{\Dt}
		= - \prt{\bF\kh\np}_p + \prt{\bF\jmh\np}_p.
	\end{equation}
	Since $\prt{\brho\i\n}_p\geq 0$, the left hand side of this equation is strictly negative. The right hand side is comprised of
	\begin{align}\label{eq:systemtermsone}
		- \prt{\bF\kh\np}_p & =
		- \prt{\brho\k\np}_p \pos{\psi\kp\np} \pos{\bu\kh}_p
		- \prt{\brho\kp\np}_p \pos{\psi\k\np} \neg{\bu\kh}_p,
		\\
		\label{eq:systemtermstwo}
		\prt{\bF\jmh\np}_p  & =
		\prt{\brho\jm\np}_p \pos{\psi\j\np} \pos{\bu\jmh}_p
		+ \prt{\brho\j\np}_p \pos{\psi\jm\np} \neg{\bu\jmh}_p.
	\end{align}
	Just as in the proof of \cref{th:implicitpositivity}, the first term of \eqref{eq:systemtermsone} and the second term of \eqref{eq:systemtermstwo} are non-negative, since $\prt{\brho\k\np}_p,\prt{\brho\j\np}_p<0$; the other terms are also guaranteed to be non-negative. Therefore, the right hand side of \cref{eq:schemesystemcontradictionentry} is shown to be non-negative, producing a contradiction.

	Boundedness is now proven in a similar fashion, by considering instead the sum of \cref{eq:schemesystemcontradiction} over all species:
	\begin{equation}\label{eq:schemesystemcontradictionsummed}
		\sum_{i=j}^{k}\prt{\s\i\np-\s\i\n}\frac{\Dx}{\Dt}
		= - \sum_{p=1}^{P} \prt{\bF\kh\np}_p + \sum_{p=1}^{P} \prt{\bF\jmh\np}_p.
	\end{equation}
	Here, we assume that $\s\i\np$ is strictly larger than $\alpha$ for $j\leq i\leq k$, resulting on $\psi\i\np\leq 0$, and rendering the left hand side of \eqref{eq:schemesystemcontradictionsummed} strictly positive. The right hand side now comprises four terms, corresponding to the sums over $p$ of \cref{eq:systemtermsone,eq:systemtermstwo}. Two of the terms, $- \sum_{p}\prt{\brho\kp\np}_p \pos{\psi\k\np} \neg{\bu\kh}_p$ and $\sum_{p}\prt{\brho\jm\np}_p \pos{\psi\j\np} \pos{\bu\jmh}_p$, are identically zero; the other terms
	are guaranteed to be non-positive. Thus, the right hand side is also non-positive, yielding a contradiction.
\end{proof}

\begin{remark}[Strict Bounds]
	As in \cref{th:strictBounds}, strict bounds on the solution can be found if the initial datum also satisfies them. This, in particular, ensures that the matrix $D\ih\np$ is well-defined for cross-diffusion systems without vacuum.
\end{remark}

Armed with the bounds, we arrive at the crux of the analysis: the unconditional dissipation structure of the scheme. We will define the discrete counterpart of the energy \eqref{eq:systemenergy} as
\begin{align}\label{eq:kineticgradientflowenergydiscreteSystem}
	E_\Delta[\brho\n]       & = \curlyH_\Delta[\brho\n] + \curlyV_\Delta[\brho\n] + \curlyW_\Delta[\brho\n],
	\\
	\curlyH_\Delta[\brho\n] & = \sum_{i=1}^{2M} \brk*{
		H_\rho\prt{\rho\i\n} + H_\eta\prt{\eta\i\n} + H_\sigma\prt{\sigma\i\n}
	} \Dx,                                                                                                   \\
	\curlyV_\Delta[\brho\n] & = \sum_{i=1}^{2M} \brk*{
		V_{\rho, i} \rho\i\n + V_{\eta, i} \eta\i\n
	} \Dx,                                                                                                   \\
	\curlyW_\Delta[\brho\n] & = \frac{1}{2} \sum_{i=1}^{2M} \sum_{k=1}^{2M} \brk*{
		\rho\i\n \rho\k\n W_{\rho, i-k} +
		\eta\i\n \eta\k\n W_{\eta, i-k} +
		2\rho\i\n \eta\k\n W_{\sigma, i-k}
	} \Dx^2,
\end{align}
and show:
\begin{theorem}[Energy dissipation]\label{th:energydissipationsystem}
	Scheme \eqref{eq:gradientSystemScheme} satisfies a fully discrete version of the energy dissipation property \eqref{eq:energyDissipationsaturationSystem} unconditionally; namely, the discrete energy \eqref{eq:kineticgradientflowenergydiscreteSystem} satisfies
	\begin{equation}
		\frac{E_\Delta[\rho\np] - E_\Delta[\rho\n]}{\Dt}
		\leq - \sum_{i=1}^{2M-1} \bg\ih\np \cdot M\prt{\brho\ih} \min\set{\psi\i\np,\psi\ip\np} \bg\ih\np \Dx
		\leq 0.
	\end{equation}
\end{theorem}

\begin{proof}
	We multiply the scheme by the entropy variable $\bxi\i\np$ and \revisionOne{sum over the spatial domain} to find
	\begin{equation}
		\sum_{i=1}^{2M} \bxi\i\np \cdot \prt{\brho\i\np-\brho\i\n} = -\frac{\Dt}{\Dx} \sum_{i=1}^{2M} \bxi\i\np \cdot \prt{\bF\ih\np-\bF\imh\np},
	\end{equation}
	which can be rewritten as
	\begin{align}\label{eq:VContributionIdentity}
		 & \quad \sum_{i=1}^{2M} \bV\i \cdot \prt{\brho\i\np-\brho\i\n}                  \\ \nonumber
		 & = -\frac{\Dt}{\Dx} \sum_{i=1}^{2M} \bxi\i\np \cdot \prt{\bF\ih\np-\bF\imh\np}
		- \sum_{i=1}^{2M} H_\rho'\prt{\rho\i\np} \prt{\rho\i\np-\rho\i\n}                \\ \nonumber
		 & \quad - \sum_{i=1}^{2M} H_\eta'\prt{\eta\i\np} \prt{\eta\i\np-\eta\i\n}
		- \sum_{i=1}^{2M} H_\sigma'\prt{\sigma\i\np} \prt{\sigma\i\np-\sigma\i\n}        \\ \nonumber
		 & \quad - \sum_{i=1}^{2M} \prt{\rho\i\np-\rho\i\n}(W_\rho\ast\rho\nss)\i
		- \sum_{i=1}^{2M} \prt{\rho\i\np-\rho\i\n}(W_\sigma\ast\eta\nss)\i               \\ \nonumber
		 & \quad - \sum_{i=1}^{2M} \prt{\eta\i\np-\eta\i\n}(W_\eta\ast\eta\nss)\i
		- \sum_{i=1}^{2M} \prt{\eta\i\np-\eta\i\n}(W_\sigma\ast\rho\nss)\i,
	\end{align}
	where $\bV\i=\prt{V_{\rho,i}, V_{\eta,i}}^\top$. On the other hand, the evolution of the discrete energy, $E_\Delta(\brho\np) - E_\Delta(\brho\n)$, is equal to the sum of $\curlyH_\Delta(\brho\np) - \curlyH_\Delta(\brho\n)$, $\curlyV_\Delta(\brho\np) - \curlyV_\Delta(\brho\n)$, and $\curlyW_\Delta(\brho\np) - \curlyW_\Delta(\brho\n)$. The contribution corresponding to $\curlyH$ is
	\begin{align}
		 & \quad \curlyH_\Delta(\brho\np) - \curlyH_\Delta(\brho\n) \\
		 & = \sum_{i=1}^{2M} \brk*{
			H_\rho\prt{\rho\i\np} - H_\rho\prt{\rho\i\n} + H_\eta\prt{\eta\i\np} - H_\eta\prt{\eta\i\n} + H_\sigma\prt{\sigma\i\np} - H_\sigma\prt{\sigma\i\n}
		} \Dx;
	\end{align}
	the contribution of $\curlyV$ is
	\begin{align}
		\curlyV_\Delta(\brho\np) - \curlyV_\Delta(\brho\n)
		 & = \sum_{i=1}^{2M} \brk*{
			V_{\rho, i} \prt{\rho\i\np - \rho\i\n} + V_{\eta, i} \prt{\eta\i\np - \eta\i\n}
		} \Dx
		\\ &
		= \sum_{i=1}^{2M} \bV\i \cdot \prt{\brho\i\np-\brho\i\n};
	\end{align}
	finally, the contribution of $\curlyW$ is
	\begin{align}
		\curlyW_\Delta(\brho\np) - \curlyW_\Delta(\brho\n)
		 & = \frac{1}{2} \sum_{i=1}^{2M} \sum_{k=1}^{2M} \prt*{ \rho\i\np \rho\k\np - \rho\i\n \rho\k\n } W_{\rho, i-k} \Dx^2      \\
		 & \quad +\frac{1}{2} \sum_{i=1}^{2M} \sum_{k=1}^{2M} \prt*{ \eta\i\np \eta\k\np - \eta\i\n \eta\k\n } W_{\eta, i-k} \Dx^2 \\
		 & \quad + \sum_{i=1}^{2M} \sum_{k=1}^{2M} \prt*{ \rho\i\np \eta\k\np - \rho\i\n \eta\k\n } W_{\sigma, i-k} \Dx^2.
	\end{align}
	The second contribution, that pertaining to $\curlyV$, corresponds to the identity \cref{eq:VContributionIdentity}. Through its substitution, we may rewrite the evolution of the discrete energy as the sum of three terms:
	$
		E_\Delta[\brho\np] - E_\Delta[\brho\n] = I + II + III.
	$

	The first energy term involves only entropies:
	\begin{align}
		I
		 & = \sum_{i=1}^{2M} \brk*{H_\rho\prt{\rho\i\np} - H_\rho\prt{\rho\i\n} - H_\rho'\prt{\rho\i\np} \prt{\rho\i\np-\rho\i\n}} \Dx                        \\
		 & \quad + \sum_{i=1}^{2M} \brk*{H_\eta\prt{\eta\i\np} - H_\eta\prt{\eta\i\n} - H_\eta'\prt{\eta\i\np} \prt{\eta\i\np-\eta\i\n}} \Dx                  \\
		 & \quad + \sum_{i=1}^{2M} \brk*{H_\sigma\prt{\sigma\i\np} - H_\sigma\prt{\sigma\i\n} - H_\sigma'\prt{\sigma\i\np} \prt{\sigma\i\np-\sigma\i\n}} \Dx.
	\end{align}
	This quantity is immediately controlled, using the convexity of $H_\rho$, $H_\eta$, and $H_\sigma$; for instance, $H_\rho\prt{\rho\i\np} - H_\rho\prt{\rho\i\n} - H_\rho'\prt{\rho\i\np} \prt{\rho\i\np-\rho\i\n} \leq 0$. Applying this to every term, we find $I \leq 0$.

	The second energy term involves only potentials:
	\begin{align}
		II
		 & = \frac{1}{2} \sum_{i=1}^{2M} \sum_{k=1}^{2M} \prt*{ \rho\i\np \rho\k\np - \rho\i\n \rho\k\n - \prt{\rho\i\np-\rho\i\n}\prt{\rho\k\np+\rho\k\n} } W_{\rho, i-k} \Dx^2         \\
		 & \quad +\frac{1}{2} \sum_{i=1}^{2M} \sum_{k=1}^{2M} \prt*{ \eta\i\np \eta\k\np - \eta\i\n \eta\k\n - \prt{\eta\i\np-\eta\i\n}\prt{\eta\k\np+\eta\k\n} } W_{\eta, i-k} \Dx^2    \\
		 & \quad +\frac{1}{2} \sum_{i=1}^{2M} \sum_{k=1}^{2M} \prt*{ \rho\i\np \eta\k\np - \rho\i\n \eta\k\n - \prt{\rho\i\np-\rho\i\n}\prt{\eta\k\np+\eta\k\n} } W_{\sigma, i-k} \Dx^2  \\
		 & \quad +\frac{1}{2} \sum_{i=1}^{2M} \sum_{k=1}^{2M} \prt*{ \rho\i\np \eta\k\np - \rho\i\n \eta\k\n - \prt{\eta\i\np-\eta\i\n}\prt{\rho\k\np+\rho\k\n} } W_{\sigma, i-k} \Dx^2.
	\end{align}
	Exploiting the symmetric character of the potentials, e.g. $W_{\sigma, i-k} = W_{\sigma, k-i}$, this term is shown to be identically zero.

	The last energy term involves only fluxes:
	\begin{align}
		III & = -\Dt \sum_{i=1}^{2M} \bxi\i\np \cdot \prt{\bF\ih\np-\bF\imh\np}
		= \Dt \sum_{i=1}^{2M-1} \prt{\bxi\ip\np - \bxi\i\np} \cdot \bF\ih\np,
	\end{align}
	performing summation by parts and invoking the zero-flux boundary conditions. Applying the definition of the numerical flux, each summand is
	\begin{equation}
		\prt{\bxi\ip\np - \bxi\i\np} \cdot \prt*{\diag\prt{{\brho\i\np}} {\psi\ip\np} \pos{\bu\ih\np} + \diag\prt{{\brho\ip\np}} {\psi\i\np} \neg{\bu\ih\np}}.
	\end{equation}
	Note that we write ${\psi\i\np}$ and ${\psi\ip\np}$, rather than $\pos{\psi\i\np}$ and $\pos{\psi\ip\np}$, because the bounds of \cref{th:implicitpositivitysystem} imply that these terms cannot be negative. The summand equals
	\begin{equation}
		\prt{\bxi\ip\np - \bxi\i\np} \cdot \prt{{\psi\ip\np} \pos{\bv\ih\np} + {\psi\i\np} \neg{\bv\ih\np}},
	\end{equation}
	using the definition of $D\ih\np$, and the fact that the $p\th$ entry of the vectors $\bu\ih\np$ and $\bv\ih\np$ have the same sign by construction.
	Employing, in turn, the definitions of $\bv\ih\np$ and $\bg\ih\np$, and the positive-semi-definiteness of $M$, this term is bounded above by
	\begin{equation}
		- \bg\ih\np \cdot M\prt{\brho\ih\np} \min\set{\psi\i\np,\psi\ip\np} \bg\ih\np \Dx \leq 0,
	\end{equation}
	therefore controlling the sum:
	\begin{equation}
		III
		\leq
		-\Dt \sum_{i=1}^{2M-1} \bg\ih\np \cdot M\prt{\brho\ih\np} \min\set{\psi\i\np,\psi\ip\np} \bg\ih\np \Dx
		\leq 0.
	\end{equation}
	This finally yields
	\begin{equation}
		E_\Delta[\brho\np] - E_\Delta[\brho\n] = I + II + III \leq III \leq 0.
	\end{equation}

\end{proof}

\begin{remark}[Higher Dimensions]
	Scheme \eqref{eq:gradientSystemScheme} will be extended to the higher-dimensional setting through the process described in \cref{th:higherdimensions}.
\end{remark}

\section{Numerical Experiments}
\label{sec:numericalExperiments}

This section is devoted to showcasing the properties of our numerical schemes in several challenging problems. We will use the implicit scheme \eqref{eq:gradientSystemScheme} for all examples, except for \cref{sec:two-segments}, where we compare the performance of both the implicit scheme \eqref{eq:S2saturation} and its explicit version. We shall prioritise numerical experiments involving systems of equations, as these are both more challenging and more interesting.

We first verify the numerical accuracy of our implicit scheme on systems with and without saturation in \cref{sec:SKT}. \Cref{sec:two-segments} describes the effect of saturation in a gradient flow with an external potential, a case not directly covered by the theory of gradient flows with non-linear mobility \cite{CLSS10}. We numerically explore the qualitative properties of the solutions, showing that saturation leads to the convergence towards stationary states with kinks at the free boundary of the fully saturated region. \Cref{sec:freeze} is devoted to another novel qualitative effect for systems due to saturation, what we call the \textit{freeze-in-place} phenomenon. Because of the upper bound on the density, certain populations, whose fate was to segregate in the absence of saturation effects, do not achieve complete segregation, as the total population reaches the saturation level, leading to a ``frozen'' configuration. Finally, \cref{sec:mastercard} showcases cell-cell adhesion sorting phenomena, based on the different strengths of attraction between two subpopulations. The different attractions between the species lead to segregation, partial engulfment, or complete engulfment, as was reported in \cite{C.M.S+2019} for compactly supported attractive kernels.

\subsection{Cross-Diffusion Experiment}
\label{sec:SKT}

\placedfigure{
\includegraphics{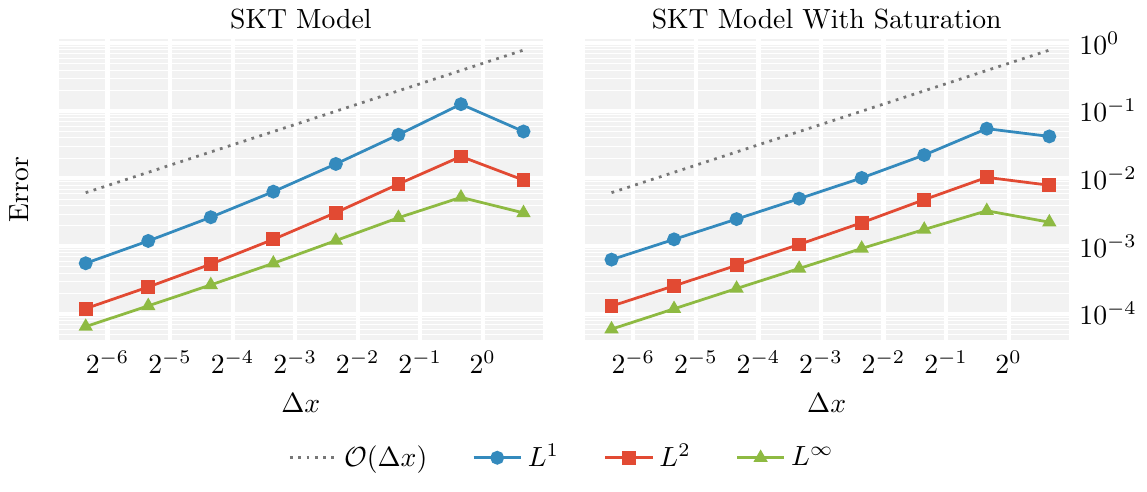}
\caption{
SKT model.
$\Lone$, $\Ltwo$, and $\Linf$ errors of the numerical solution of scheme \eqref{eq:gradientSystemScheme} with respect to the manufactured solution \eqref{eq:manufactured}.
\textbf{Left}: regular model \eqref{eq:SKTGradientFlow}.
\textbf{Right}: model with saturation \eqref{eq:SKTGradientFlowSaturation}.
$\Dx=2^{-k}\pi$ and $\Dt=2^{-k}/10$ for $k=1,\cdots,8$.
}
\label{fig:SKT}
}

The first experiment aims to verify the numerical accuracy of the scheme \eqref{eq:gradientSystemScheme} on a problem with cross diffusion, a feature not present in the scalar tests found in \cite{B.C.H2020}. To that end, we shall employ the SKT population model \cite{S.K.T1979}, given by
\begin{equation}\label{eq:SKT}
	\begin{cases}
		\pt\rho = \div\brk{\prt*{2\rho+\eta}\grad\rho + \rho\grad\eta \vphantom{{x^2}^2} } + s_\rho, \\
		\pt\eta = \div\brk{\eta\grad\rho + \prt*{\rho+2\eta}\grad\eta \vphantom{{x^2}^2} } + s_\eta,
	\end{cases}
\end{equation}
where $\bs=\prt{s_{\rho}, s_{\eta}}^\top$ is a given source term. We will consider the problem on $\Omega = (-\pi,\pi)^2$, posed with periodic boundary conditions.
\Cref{eq:SKT} can be cast in an analogous form to \eqref{eq:multispeciesgradientflow} with right-hand sides, written as
\begin{equation}\label{eq:SKTGradientFlow}
	\pt
	\begin{pmatrix} \rho \\ \eta \end{pmatrix}
	= \div \brk*{
		M\prt{\rho,\eta}
		\grad \begin{pmatrix} \log\rho \\ \log\eta \end{pmatrix}
	} +
	\begin{pmatrix} s_{\rho} \\ s_{\eta} \end{pmatrix}
	,
\end{equation}
where
\begin{equation}
	M\prt{\rho,\eta}
	=
	\begin{pmatrix}
		\rho\prt{2\rho + \eta} & \rho\eta               \\
		\rho\eta               & \prt{\rho + 2\eta}\eta
	\end{pmatrix}
	=
	\begin{pmatrix}
		\rho & 0    \\
		0    & \eta
	\end{pmatrix}
	\begin{pmatrix}
		2\rho + \eta & \eta         \\
		\rho         & \rho + 2\eta
	\end{pmatrix}.
\end{equation}
If $\bs=0$, then this is a gradient flow with respect to the energy
\begin{equation}
	E = \int_{\Omega} \rho\prt*{\log\rho-1} + \eta\prt*{\log\eta-1} \dbx.
\end{equation}

In order to perform the validation, we will use manufactured solutions. We assume the solution is of the form
\begin{equation}\label{eq:manufactured}
	\rho\prt{t,x,y} = \frac{1}{4}\brk*{1 + \sin(x + t)},\quad
	\eta\prt{t,x,y} = \frac{1}{4}\brk*{1 + \cos(y + t)},
\end{equation}
and compute the corresponding source terms,
\begin{align}
	s_{\rho} & =
	\frac{1}{4} \cos\left( t + x \right)
	+ \frac{3}{16} \sin\left( t + x \right)
	+ \frac{1}{16} \cos\left( t + y \right)
	\\&\quad
	- \frac{1}{8} \cos^{2}\left( t + x \right)
	+ \frac{1}{8} \sin^{2}\left( t + x \right)
	+ \frac{1}{8} \sin\left( t + x \right) \cos\left( t + y \right),
\end{align}
and
\begin{align}
	s_{\eta} & =
	\frac{1}{16} \sin\left( t + x \right)
	+ \frac{3}{16} \cos\left( t + y \right)
	- \frac{1}{4} \sin\left( t + y \right)
	\\&\quad
	+ \frac{1}{8} \cos^{2}\left( t + y \right)
	- \frac{1}{8} \sin^{2}\left( t + y \right)
	+ \frac{1}{8} \sin\left( t + x \right) \cos\left( t + y \right).
\end{align}
We will also perform the validation on a version of \Cref{eq:SKTGradientFlow} with a saturation term:
\begin{equation}\label{eq:SKTGradientFlowSaturation}
	\pt
	\begin{pmatrix} \rho \\ \eta \end{pmatrix}
	= \div \brk*{
		\prt*{1-\rho-\eta}
		M\prt{\rho,\eta}
		\grad \begin{pmatrix} \log\rho \\ \log\eta \end{pmatrix}
	} +
	\begin{pmatrix} s_{\rho} \\ s_{\eta} \end{pmatrix}
	.
\end{equation}
Assuming the same solution, \eqref{eq:manufactured}, yields the sources
\begin{align}
	s_{\rho} & =
	\frac{1}{4} \cos\left( t + x \right)
	+ \frac{3}{32} \sin\left( t + x \right)
	+ \frac{1}{32} \cos\left( t + y \right)
	- \frac{1}{64} \cos^{2}\left( t + x \right)
	+ \frac{1}{64} \sin^{2}\left( t + x \right)
	\\&\quad
	- \frac{1}{32} \sin^{3}\left( t + x \right)
	- \frac{1}{64} \cos^{2}\left( t + y \right)
	+ \frac{1}{64} \sin^{2}\left( t + y \right)
	+ \frac{1}{16} \cos^{2}\left( t + x \right) \sin\left( t + x \right)
	\\&\quad
	- \frac{1}{32} \cos^{2}\left( t + y \right) \sin\left( t + x \right)
	+ \frac{1}{64} \sin^{2}\left( t + y \right) \sin\left( t + x \right)
	\\&\quad
	+ \frac{3}{64} \cos^{2}\left( t + x \right) \cos\left( t + y \right)
	- \frac{1}{16} \sin^{2}\left( t + x \right) \cos\left( t + y \right),
\end{align}
and
\begin{align}
	s_{\eta} & =
	\frac{1}{32} \sin\left( t + x \right)
	+ \frac{3}{32} \cos\left( t + y \right)
	- \frac{1}{4} \sin\left( t + y \right)
	+ \frac{1}{64} \cos^{2}\left( t + x \right)
	- \frac{1}{64} \sin^{2}\left( t + x \right)
	\\&\quad
	+ \frac{1}{64} \cos^{2}\left( t + y \right)
	- \frac{1}{32} \cos^{3}\left( t + y \right)
	- \frac{1}{64} \sin^{2}\left( t + y \right)
	- \frac{1}{16} \cos^{2}\left( t + y \right) \sin\left( t + x \right)
	\\&\quad
	+ \frac{3}{64} \sin^{2}\left( t + y \right) \sin\left( t + x \right)
	+ \frac{1}{64} \cos^{2}\left( t + x \right) \cos\left( t + y \right)
	\\&\quad
	- \frac{1}{32} \sin^{2}\left( t + x \right) \cos\left( t + y \right)
	+ \frac{1}{16} \sin^{2}\left( t + y \right) \cos\left( t + y \right).
\end{align}

We solve \cref{eq:SKTGradientFlow,eq:SKTGradientFlowSaturation} over the interval $t\in\prt{0,0.1}$, choosing $\Dx=\Dy=2^{-k}\pi$ and $\Dt=2^{-k}/10$ for $k=1,\cdots,8$, and compute the $\Lp$ error ($p=1,2,\infty$) at the final time with respect to the solution \eqref{eq:manufactured}.
\revisionTwo{To handle the points where the solutions touch zero, we proceed as described in \cref{th:vacuum}}.
\Cref{fig:SKT} shows the results of the validation; scheme \eqref{eq:gradientSystemScheme} clearly approximates the manufactured solution \eqref{eq:manufactured} with first-order accuracy, \revisionTwo{despite the vacuum modification}.
\subsection{Saturation Experiment}
\label{sec:two-segments}

\placedfigure{
	\includegraphics{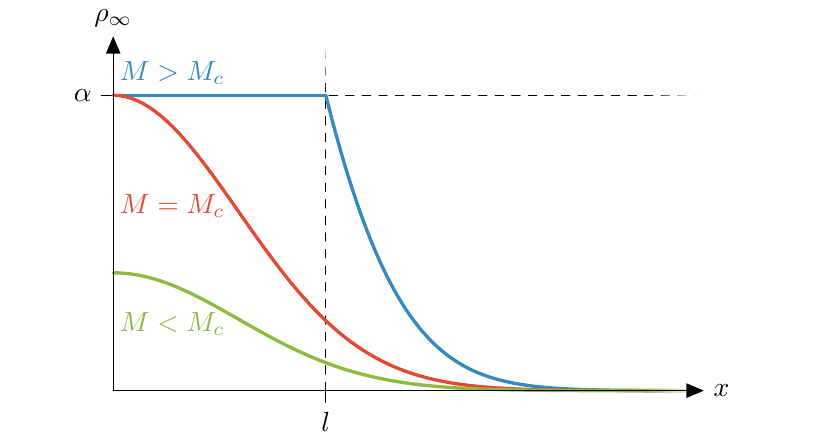}
	\caption{
		Stationary solutions to \cref{eq:segments} in one dimension.
		In the cases $M<M_c$ and $M=M_c$, the solution is a half-Gaussian. For $M>M_c$, the solution is made of a straight saturated segment and a Gaussian tail.
	}
	\label{fig:segmentDiagram}
}
\placedfigure{
	\includegraphics{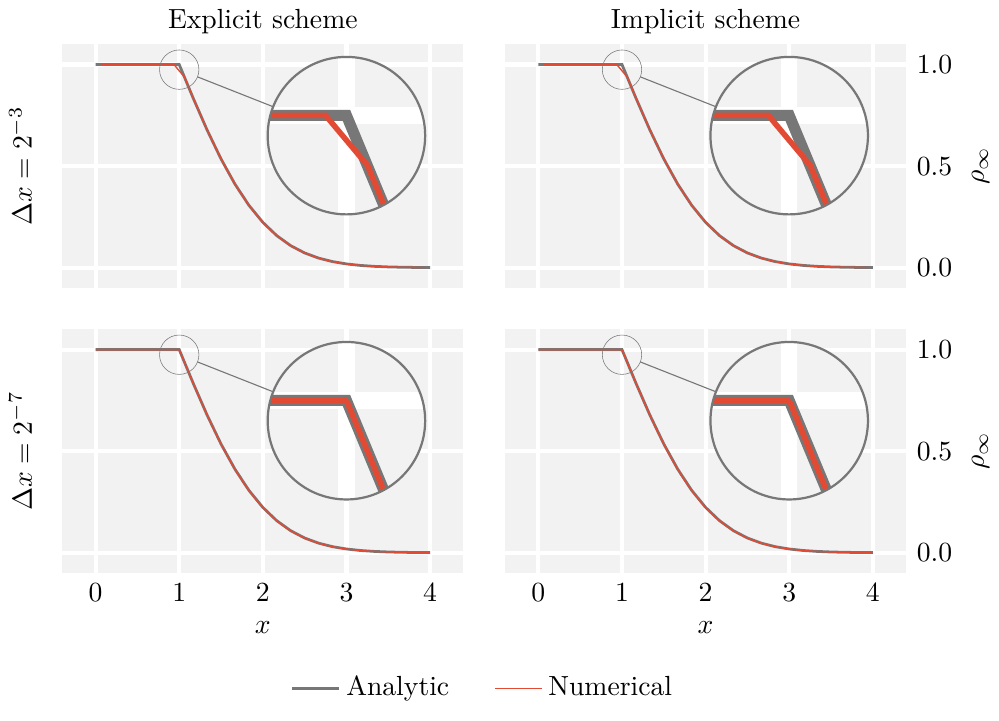}
	\caption{
		Stationary solution of \cref{eq:segments} in one dimension with mass $M=1.66$ corresponding to $l=1$.
		$\Omega=\prt{0,4}$,
		$\alpha=1$,
		$D=1$,
		$C=1$.
		Explicit: $\Dt=\Dx^2/4$.
		Implicit: $\Dt=\Dx$.
	}
	\label{fig:segments1D}
} \placedfigure{
\includegraphics{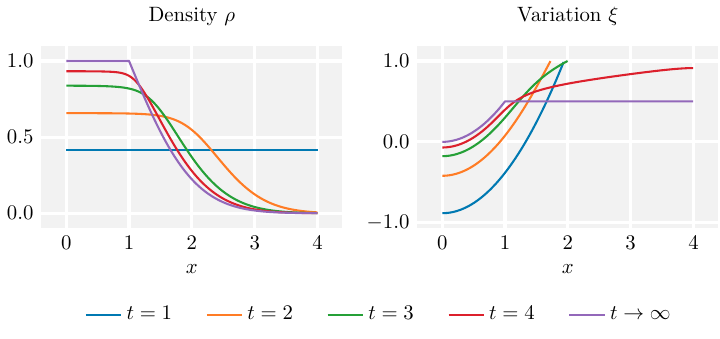}
\caption{
\revisionTwo{
Solution of \cref{eq:segments} in one dimension with mass $M=1.66$ corresponding to $l=1$.
$\Omega=\prt{0,4}$,
$\alpha=1$,
$D=1$,
$C=1$, $\Dt=\Dx=2^{-7}$.
}
}
\label{fig:segmentsNew}
} \placedfigure{
\includegraphics[width=0.75\linewidth]{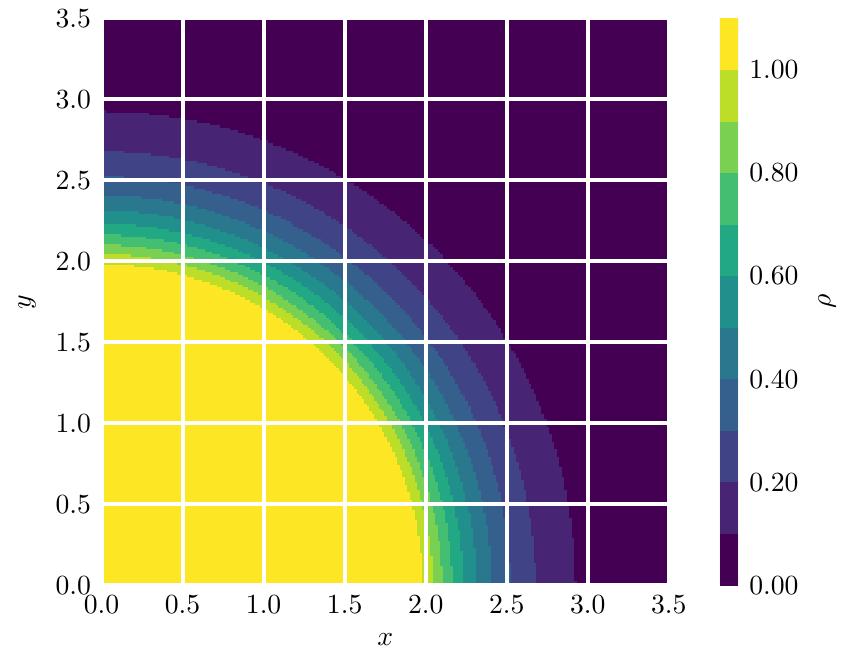}
\caption{
Stationary solution of \cref{eq:segments} in two dimensions with mass $M=4.71$ corresponding to $l=2$.
$\Omega=\prt{0,4}^2$,
$\alpha=1$,
$D=1$,
$C=1$,
$\Dt=\Dx=2^{-7}$.
}
\label{fig:segments2D}
}
The second experiment concerns the saturation effects. In order to demonstrate the boundedness property of the numerical schemes, we consider the equation
\begin{equation}\label{eq:segments}
	\pt\rho = \div\prt*{\rho\prt{\alpha-\rho} \grad\prt*{D\log\prt{\rho}+\frac{C}{2}\abs{\bx}^2}}.
\end{equation}
This problem exhibits different steady states which depend on $\revisionOne{m}\coloneqq \pnorm{1}{\rho_0}$, the (conserved) mass of the solution. If $\revisionOne{m}$ is small ($\revisionOne{m}<\revisionOne{m}_c$ for some critical mass $\revisionOne{m}_c$), the steady state is simply
$
	\rho_\infty = A\exp\prt*{-\frac{C}{2D}\abs{\bx}^2},
$
where $A$ is a positive constant such that $\pnorm{1}{\rho_\infty}=\revisionOne{m}=\pnorm{1}{\rho_0}$. However, if the mass is beyond the critical value, the steady state consists of two segments:
\begin{equation}
	\rho_\infty =
	\begin{cases}
		\alpha                                             & \text{if } \abs{\bx}\leq l, \\
		\displaystyle B\exp\prt*{-\frac{C}{2D}\abs{\bx}^2} & \text{if } \abs{\bx}\geq l,
	\end{cases}
\end{equation}
where $B$ is again a normalisation constant, and $l$ is to be determined, see \cref{fig:segmentDiagram} for a diagram. Observing that $\rho_\infty = \alpha$ whenever $\abs{\bx}=l$, we determine $B=\alpha\exp\prt*{\frac{Cl^2}{2D}}$; thus the steady state is rewritten as
\begin{equation}\label{eq:sswkink}
	\rho_\infty\prt{\bx} = \alpha \exp\prt*{-\frac{C}{2D}\pos{\abs{\bx}^2-l^2}},
\end{equation}
where $\pos{s}=\max\set{s,0}$ for any $s\in\R$. The value of $l$ is an increasing function of $\revisionOne{m}$, and can be determined form the initial datum. It is left to the reader to check \eqref{eq:sswkink} are indeed weak stationary solutions to \eqref{eq:segments}.

We may further explore the normalization condition.
We first consider \cref{eq:segments} in one dimension, posed on $\Omega = \prt{0,\infty}$; by virtue of the conservation of mass, we arrive at the self-consistency equation
\begin{align}
	\revisionOne{m} & = \int_{0}^{\infty} \alpha \exp\prt*{-\frac{C}{2D}\pos{x^2-l^2}} \dx = \alpha\brk*{ l + \int_{l}^{\infty} \exp\prt*{\frac{C}{2D}\prt{l^2-x^2}} \dx } \\
	                & = \alpha\brk*{ l + \sqrt{\frac{\pi D}{2C}}\exp\prt*{\frac{C}{2D}l^2}\brk*{1- \erf\prt*{l\sqrt{\frac{C}{2D}}}} }.
\end{align}
In particular, this expression yields the value of the critical mass,
$\revisionOne{m}_c = \alpha\sqrt{\frac{\pi D}{2C}}$,
which corresponds to $l=0$.
In the two dimensional case, we pose \cref{eq:segments} on the domain $\Omega = \prt{0,\infty}^2$. We find
\begin{align}
	\revisionOne{m}
	 & = \int_{0}^{\infty}\!\!\! \int_{0}^{\infty}
	\!\!\alpha \exp\prt*{-\frac{C}{2D}\pos{x^2+y^2-l^2}}
	\dx \dy                                        \\&= \int_{0}^{\frac{\pi}{2}} \!\!\!\int_{0}^{\infty}
	\!\!\alpha r\exp\prt*{-\frac{C}{2D}\pos{r^2-l^2}}
	\dr \dtheta                                    \\
	 & = \frac{\alpha\pi}{2} \brk*{
		\int_{0}^{l} r \dr
		+
		\int_{l}^{\infty} r\exp\prt*{-\frac{C}{2D}\prt{r^2-l^2}} \dr
	} = \frac{\alpha\pi}{2} \brk*{
		\frac{l^2}{2}
		+\frac{D}{C}
	},
\end{align}
yielding
$\revisionOne{m}_c = \frac{\alpha \pi D}{2C}.$

We will solve \cref{eq:segments} in one and two dimensions, over sufficiently large but finite domains, with supercritical masses, in order to validate the behaviour of the schemes in the presence of saturation. The problems are initialised with a constant density $\rho = \revisionOne{m}/\abs{\Omega}$, where $\abs{\Omega}$ is the Lebesgue measure of $\Omega$; the solution is computed for $t\in(0,15)$, and the final state is taken as an approximation of the steady state. We will employ Scheme \eqref{eq:gradientSystemScheme}, as well as an explicit counterpart found by evaluating the numerical fluxes at the $n\th$ time.

In one dimension, we let $\Omega=(0,4)$, \revisionOne{prescribe no-flux boundary conditions}, and choose the supercritical mass $\revisionOne{m}=1.66$, which corresponds to $l=1$. \Cref{fig:segments1D} shows the numerical steady state, computed with both the explicit and implicit schemes, each on a coarse ($\Dx=2^{-3}$) and a fine ($\Dx=2^{-7}$) mesh. Both schemes capture the upper bound of the solution, but the implicit scheme permits the use of a much larger time step.
\revisionTwo{\Cref{fig:segmentsNew} shows the evolution in time of the solution with the implicit scheme, as well as the evolution of the variation $\xi$. As expected, as the solution approaches the steady state, $\xi$ tends to a constant on the support of $\rho_\infty\psi(\rho_\infty)$.}

In two dimensions, we set $\Omega=(0,4)^2$, \revisionOne{again with no-flux boundary conditions}, and choose the supercritical mass $\revisionOne{m}=4.71$, found by letting $l=2$. \Cref{fig:segments2D} shows the numerical steady state computed with the implicit scheme.

We would like to emphasise that there is no general weak solution theory yet that establishes the kink solutions \eqref{eq:sswkink} as the global asymptotic profile for generic initial data to \eqref{eq:segments} with supercritical mass, though this behaviour was observed numerically for diverse initial data. The Cauchy theory for these general gradient flows \eqref{eq:segments} with non-linear mobility and external interaction potentials, not included in \cite{CLSS10}, is challenging and will be studied elsewhere. \subsection{Freeze-In-Place Experiment}
\label{sec:freeze}

\placedfigure{
\includegraphics[width=0.6\linewidth]{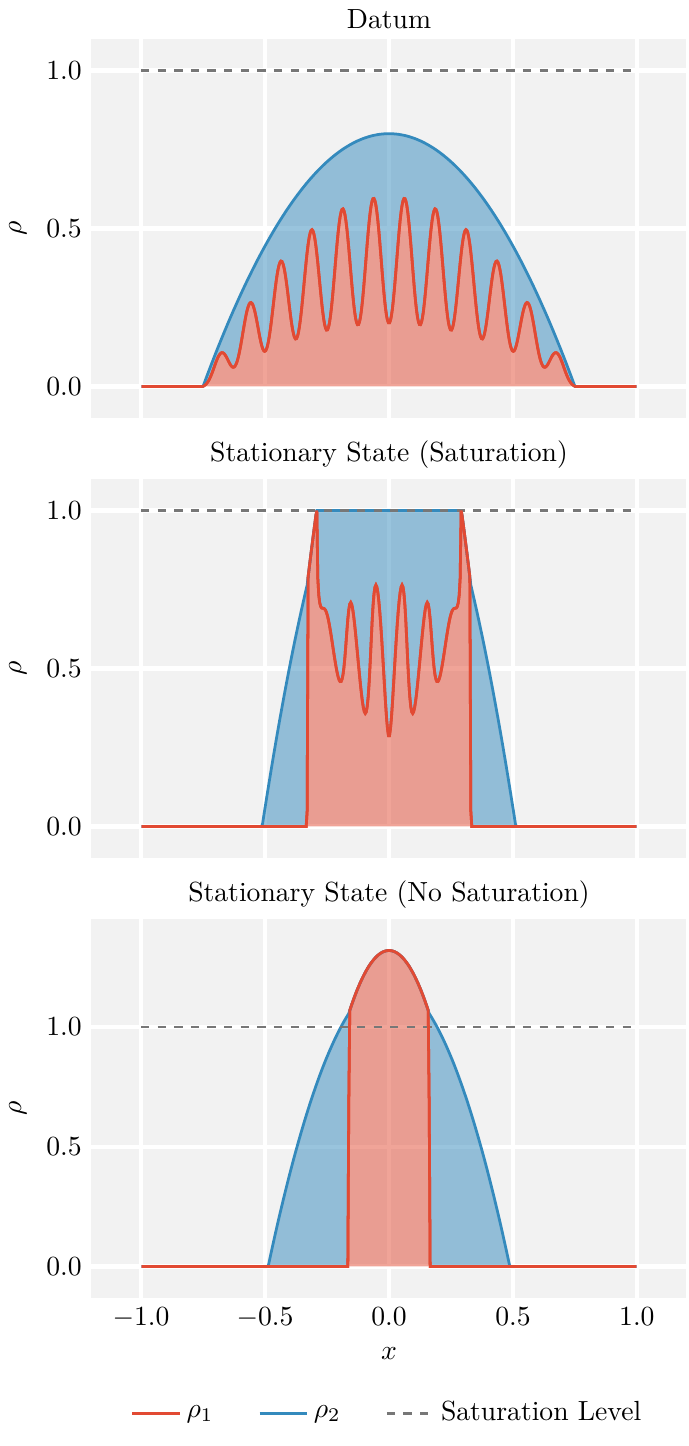}
\caption{
Stationary solutions of \cref{eq:squiggles} in one dimension.
$\Omega=\prt{-1,1}$,
$\alpha=1$,
$D=0.1$,
$C_1=4$,
$C_2=2$,
$\Dx=2^{-8}$,
and $\Dt=0.1$.
\textbf{Top}: datum \eqref{eq:squigglesDatum1D}.
\textbf{Middle}: ``frozen-in-place'' stationary state with saturation effects.
\textbf{Bottom}: segregated stationary state without saturation.
}
\label{fig:squiggles1D}
} \placedfigure{
\includegraphics[width=0.95\linewidth]{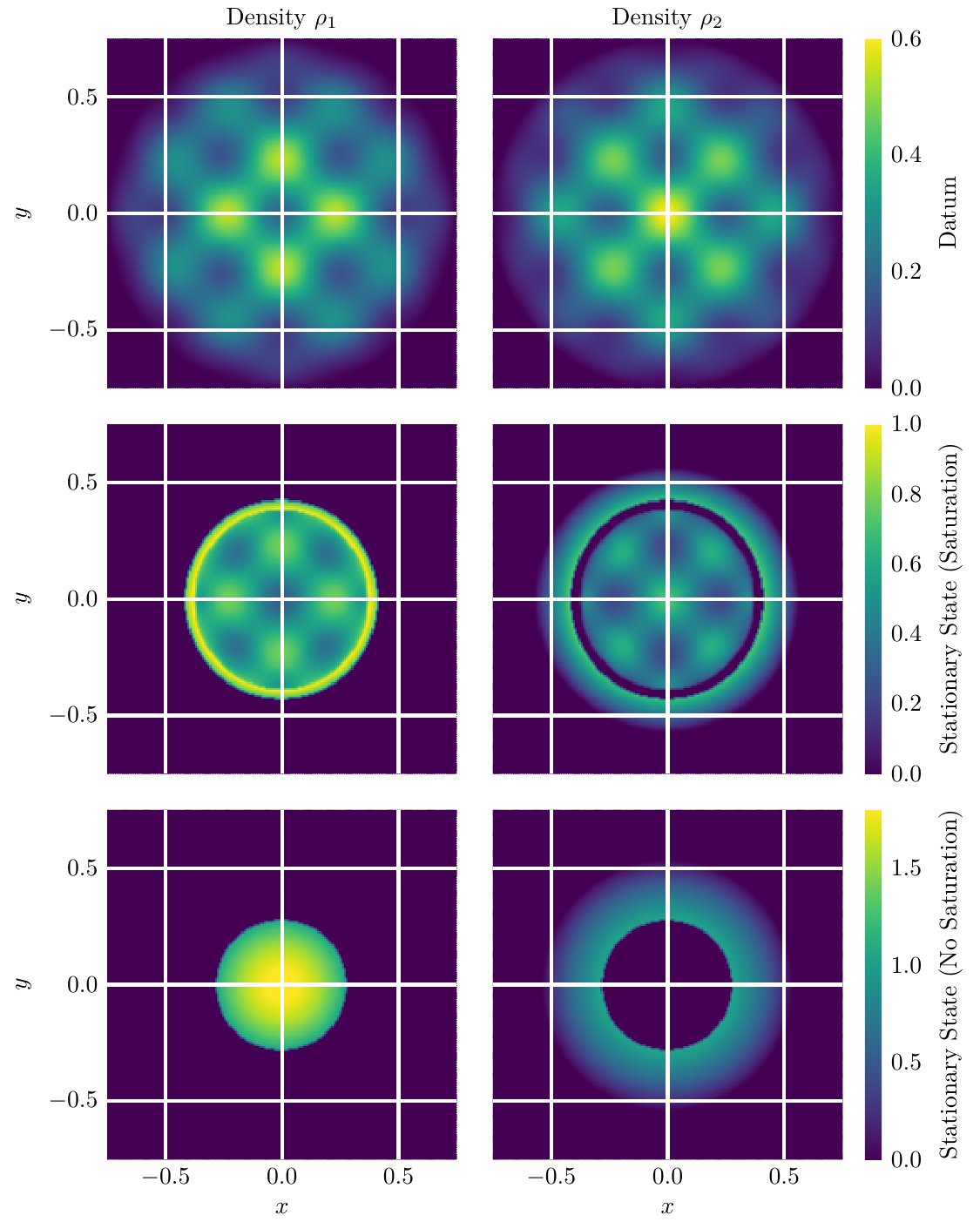}
\caption{
Stationary solutions of \cref{eq:squiggles} in two dimensions.
$\Omega=\prt{-1,1}^2$,
$\alpha=1$,
$D=0.1$,
$C_1=4$,
$C_2=2$,
$\Dx=2^{-7}$,
and $\Dt=0.1$.
\textbf{Top}: datum \eqref{eq:squigglesDatum1D}.
\textbf{Middle}: ``frozen-in-place'' stationary state with saturation effects.
\textbf{Bottom}: segregated stationary state without saturation.
}
\label{fig:squiggles2D}
}
This test will explore the appearance of infinitely many steady states as a result of the saturation effects. To that end, we will consider the two species system
\begin{equation}\label{eq:squiggles}
	\begin{cases}
		\pt\rho = \div\brk*{\rho\prt{\alpha-\sigma} \grad\prt*{D\sigma+\frac{C_1}{2}\abs{\bx}^2}}, \\
		\pt\eta = \div\brk*{\eta\prt{\alpha-\sigma} \grad\prt*{D\sigma+\frac{C_2}{2}\abs{\bx}^2}}.
	\end{cases}
\end{equation}
There are two effects present here: diffusion and confinement. The non-linear diffusion acts on the sum, $\sigma= \rho+\eta$. This can be seen by ignoring the $\abs{\bx}^2$ terms; then, the two equations sum to $\pt \sigma = D\div\prt*{\sigma\prt{\alpha-\sigma} \grad\sigma}$. Hence, the diffusion terms act to smooth irregularities on $\sigma$, but not necessarily on the individual profiles. Meanwhile, the quadratic potentials serve to confine each species, driving their densities towards the origin.

Crucially, we will choose $C_1 = 2C_2$; the confinement effects will therefore act more strongly on $\rho$ than on $\eta$. This difference in strengths, combined with the diffusion, will cause $\rho$ to displace $\eta$ near the origin. Below the saturation level, the species are expected to segregate completely; however, this might not occur if the species sum reaches the saturation level. As $\sigma$ approaches $\alpha$, the mobility reduces drastically, which will cause the solution to ``freeze'' in whatever configuration it may find itself.

We will let $D=0.1$, $C_1=4$, $C_2=2$, and $\alpha=1$. In one dimension, we solve the problem on $\Omega=(-1,1)$, with the initial datum
\begin{equation}\label{eq:squigglesDatum1D}
	\begin{cases}
		\rho_1(0,x) = \brk*{f(x)\prt*{1-\frac{\cos(\omega x)}{2}}}^+, \\
		\rho_2(0,x) = \brk*{f(x)\prt*{1+\frac{\cos(\omega x)}{2}}}^+,
	\end{cases}
	\qquad
	\text{where }
	f(x) = \frac{4}{5}\prt*{1-\prt*{\frac{4x}{3}}^2},
\end{equation}
and $\omega = 16\pi$. We will solve the system for $t\in\prt{0,30}$ and take the final state as an approximation to the asymptotic steady state which corresponds to this datum. \Cref{fig:squiggles1D} shows the datum, the ``frozen-in-place'' steady state, as well as the segregated steady state of the analogous system without saturation, all computed with the implicit scheme \eqref{eq:gradientSystemScheme}.

In two dimensions, we solve the problem on $\Omega=(-1,1)^2$. The datum now reads
\begin{equation}\label{eq:squigglesDatum2D}
	\begin{cases}
		\rho_1(0,x,y) = \brk*{f\prt*{\sqrt{x^2+y^2}}\prt*{1-\frac{\cos(\omega x)\cos(\omega y)}{2}}}^+, \\
		\rho_2(0,x,y) = \brk*{f\prt*{\sqrt{x^2+y^2}}\prt*{1+\frac{\cos(\omega x)\cos(\omega y)}{2}}}^+,
	\end{cases}
\end{equation}
where $\omega = 4\pi$ and $f$ is defined as above. Once again, we take the solution at $t=30$ as an approximation to the asymptotic steady state. \Cref{fig:squiggles2D} shows the datum as well as the steady states with and without saturation. As in the one-dimensional case, the saturation induces a ``freezing-in-place'' effect, preventing the complete segregation of the species. \subsection{Cell-Cell Adhesion Experiment}
\label{sec:mastercard}

\placedfigure{
\includegraphics{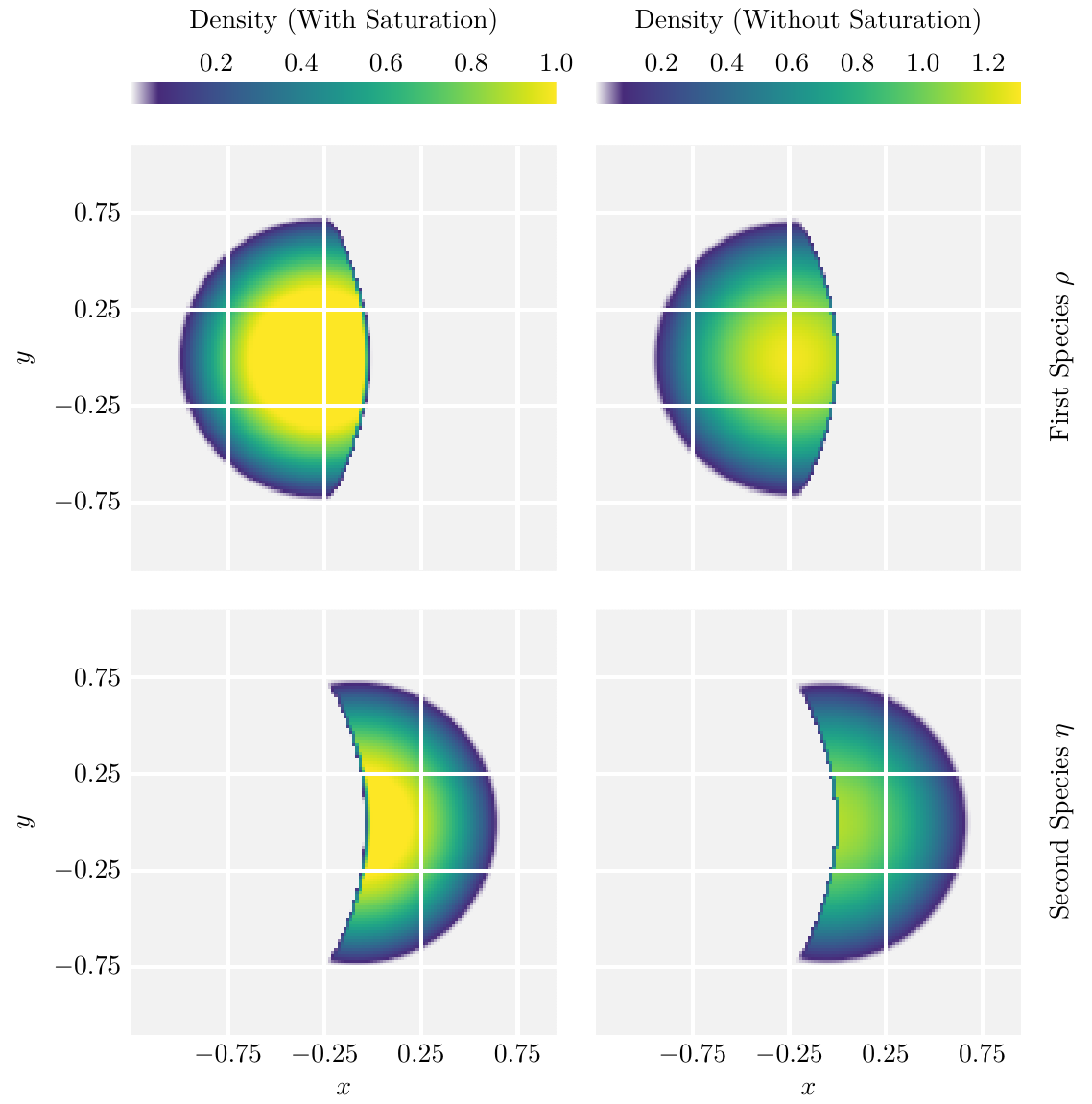}
\caption{
Steady state of \cref{eq:mastercard,eq:mastercardSaturation} in two dimensions.
$\Omega=\prt{-2,2}^2$,
$\alpha=1$,
$\varepsilon=0.1$,
$c_{\rho\rho}=2c_{\rho\eta}=2c_{\eta\rho}=c_{\eta\eta}=1$,
$\Dx=\Dy=\Dt=2^{-6}$.
\textbf{Left}: \cref{eq:mastercardSaturation} (saturation).
\textbf{Right}: \cref{eq:mastercard} (no saturation).
}
\label{fig:mastercardPartial}
} \placedfigure{
\includegraphics{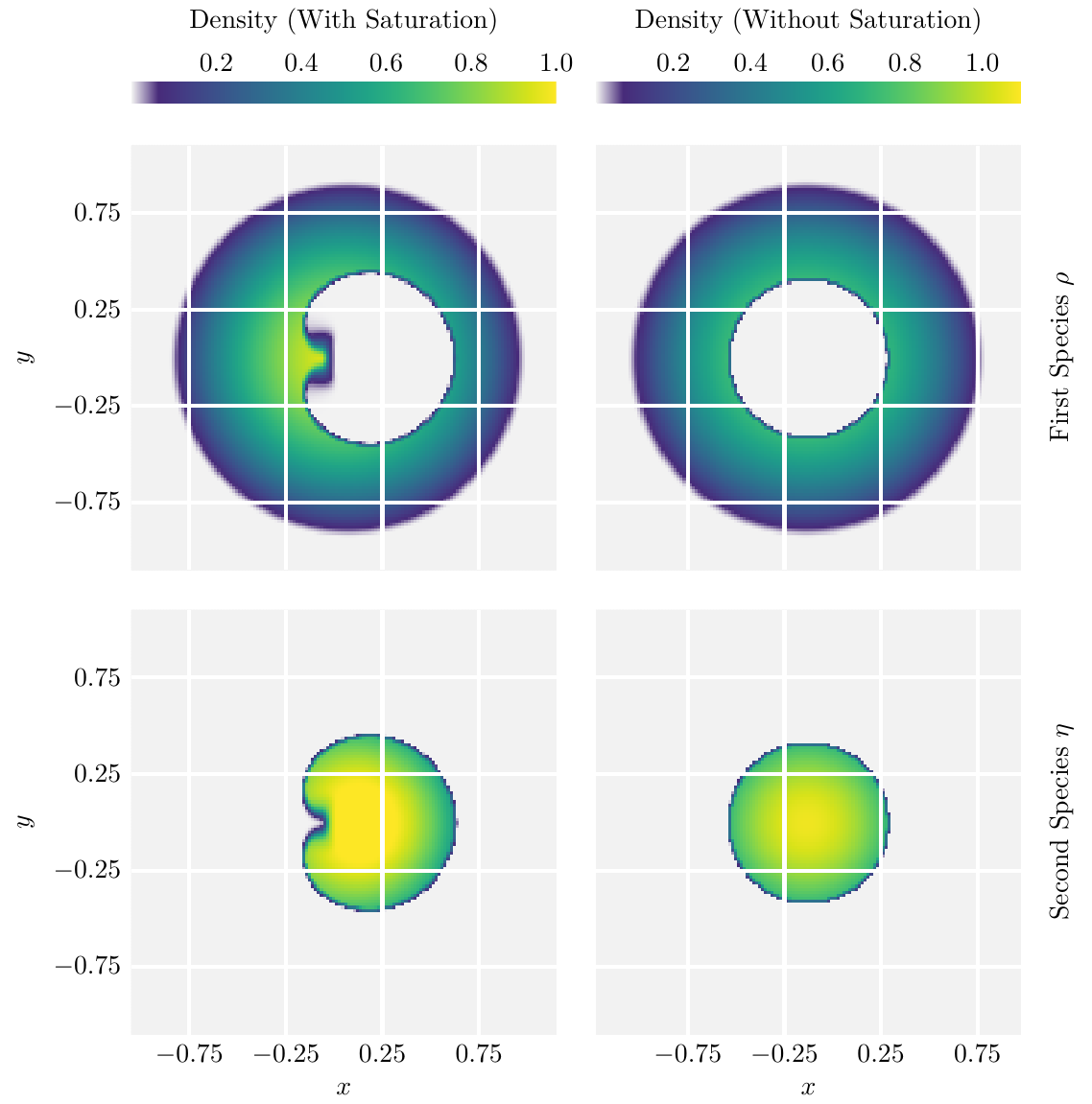}
\caption{
Steady state of \cref{eq:mastercard,eq:mastercardSaturation} in two dimensions.
$\Omega=\prt{-2,2}^2$,
$\alpha=1$,
$\varepsilon=0.1$,
$4c_{\rho\rho}=2c_{\rho\eta}=2c_{\eta\rho}=c_{\eta\eta}=1$,
$\Dx=\Dy=\Dt=2^{-6}$.
\textbf{Left}: \cref{eq:mastercardSaturation} (saturation).
\textbf{Right}: \cref{eq:mastercard} (no saturation).
}
\label{fig:mastercardEngulfment}
} \placedfigure{
	\includegraphics{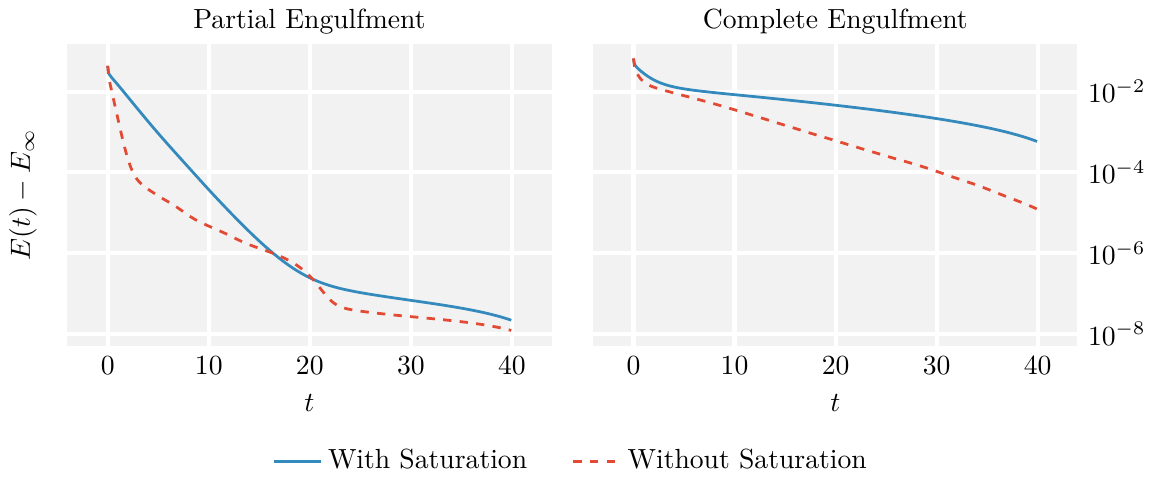}
	\caption{
		Dissipation of the free energy in the solutions to \cref{eq:mastercard,eq:mastercardSaturation} shown in \cref{fig:mastercardPartial,fig:mastercardEngulfment}.
		\textbf{Left}: partial engulfment, corresponding to \cref{fig:mastercardPartial}.
		\textbf{Right}: complete engulfment, corresponding to \cref{fig:mastercardEngulfment}.
	}
	\label{fig:mastercardEnergy}
}
To conclude the experiments section, we explore a version of the two-species model for cell-cell adhesion introduced in \cite{C.M.S+2019}. The model for the evolution of the densities of two different species, $\rho$ and $\eta$, is given by
\begin{equation}\label{eq:mastercard}
	\begin{cases}
		\pt\rho = \div\brk*{\rho\grad\prt{
				\varepsilon\sigma
				+ c_{\rho\rho} K \conv \rho
				+ c_{\rho\eta} K \conv \eta
			}}, \\
		\pt\eta = \div\brk*{\eta\grad\prt{
				\varepsilon\sigma
				+ c_{\eta\rho} K \conv \rho
				+ c_{\eta\eta} K \conv \eta
			}},
	\end{cases}
\end{equation}
in two spatial dimensions, for a positive value of $\varepsilon$, non-negative values of $c_{\rho\rho}$, $c_{\rho\eta}$, $c_{\eta\rho}$, and $c_{\eta\eta}$, and where $K(\bx) = \abs{\bx}^2/2$. The parameter $\varepsilon$ regulates the strength of non-linear diffusion on the total density of the system, which in this model accounts for localised repulsion effects. Each of the coefficients $c_{\alpha\beta}$ correspond to the strength of attraction of the species $\alpha$ to species $\beta$.

This model exhibits different asymptotic behaviours, regulated by the values of $c_{\alpha\beta}$. As discussed in \cite{C.M.S+2019}, different values of the parameters may lead to the mixing of the two species, a complete segregation, or the engulfment of one of the species (either partial or complete). We will concentrate here on the latter case, and will observe how the inclusion of saturation effects affect these behaviours, through the modified model
\begin{equation}\label{eq:mastercardSaturation}
	\begin{cases}
		\pt\rho = \div\brk*{\rho\prt{\alpha-\sigma}\grad\prt{
				\varepsilon\sigma
				+ c_{\rho\rho} K \conv \rho
				+ c_{\rho\eta} K \conv \eta
			}}, \\
		\pt\eta = \div\brk*{\eta\prt{\alpha-\sigma}\grad\prt{
				\varepsilon\sigma
				+ c_{\eta\rho} K \conv \rho
				+ c_{\eta\eta} K \conv \eta
			}}.
	\end{cases}
\end{equation}

We will prescribe an initial datum consisting of two touching disks:
\begin{equation}\label{eq:mastercardDatum2D}
	\begin{cases}
		\rho_0 = 0.95\,\mathds{1}_{\mathcal{D}_\rho}, \\
		\eta_0 = 0.95\,\mathds{1}_{\mathcal{D}_\eta},
	\end{cases}
\end{equation}
where $\mathcal{D}_\rho$ is a disk centred at $\prt{-0.5,0}$ with radius $0.5$, \revisionOne{$\mathcal{D}_\eta$} is a disk centred at \revisionOne{$\prt{0.4,0}$} with radius $0.4$, and $\mathds{1}$ indicates the characteristic function. We will let $\varepsilon=0.1$, and $\alpha=1$ when applicable. We shall solve \cref{eq:mastercard,eq:mastercardSaturation} on $\Omega=\prt{-2,2}^2$, over the interval $t\in\prt{0,45}$, and take their final state as an approximation of the asymptotic steady state. We choose $\Dx=\Dy=\Dt=2^{-6}$ to ensure high accuracy.

We first explore the partial engulfment case, given by $c_{\rho\rho}=2c_{\rho\eta}=2c_{\eta\rho}=c_{\eta\eta}=1$, where the two species are expected to partially surround each other in a lunar shape. \Cref{fig:mastercardPartial} shows the numerical steady states. In the system with saturation, the steady state exhibits a saturated, flat region. Nevertheless, both solutions are relatively similar.

We turn to the complete engulfment case, given by $4c_{\rho\rho}=2c_{\rho\eta}=2c_{\eta\rho}=c_{\eta\eta}=1$. Here, one species is expected to completely surround the other; in the long-time limit, one species will be supported on a disk, and the other, on an annulus. \Cref{fig:mastercardEngulfment} shows the numerical steady states. This time, the effects of the saturation are very visible: the initial concentration of the density creates a region of saturation where the two species remain mixed for all time, in the same vein as the example from \cref{sec:freeze}. This prevents the full segregation seen in the unsaturated case, and yields a cardioid-like free boundary between the species.

We conclude by showing the evolution of the (relative) free energy corresponding to the both the partial engulfment and the complete engulfment scenarios in \cref{fig:mastercardEnergy}. The energy is clearly dissipated by the numerical scheme, and the dissipation rates are affected by the presence of the saturation terms. In the partial engulfment case, the energies with and without saturation evolve very differently, but eventually reach a comparable value; this is consistent with the similar (though not equal) steady states reached in either setting. Meanwhile, the energies of the complete engulfment case demonstrate similar evolutions, but the dissipation rate is higher in the unsaturated setting.
\section*{Acknowledgements}
RB was funded by Labex CEMPI (ANR-11-LABX-0007-01). RB and JAC were supported by the Advanced Grant Nonlocal-CPD (Nonlocal PDEs for Complex Particle Dynamics: Phase Transitions, Patterns and Synchronization) of the European Research Council Executive Agency (ERC) under the European Union's Horizon 2020 research and innovation programme (grant agreement No. 883363). JH was partially supported under the NSF CAREER grant DMS-2153208 and AFOSR grant FA9550-21-1-0358.

\FloatBarrier

{
	\small
	\bibliographystyle{abbrv}
	\bibliography{./BailoCarrilloHu_BoundPreservingSchemes.bib}
}

\end{document}